\title{\vskip-1.0em\sf Realization of compact spaces as cb-Helson sets}
\author{\sc Yemon Choi}
\date{29th April 2015}
\newcounter{pulse}[section]
\numberwithin{pulse}{section}  
\newcommand{\thf}{} 
\theoremstyle{plain}
\newtheorem{thm}[pulse]{\thf Theorem}
\newtheorem{prop}[pulse]{\thf Proposition}
\newtheorem{lem}[pulse]{\thf Lemma}
\theoremstyle{definition}
\newtheorem{dfn}[pulse]{\thf Definition}
\theoremstyle{remark}
\newtheorem{rem}[pulse]{\thf Remark}
\newtheorem*{qnsstar}{\thf Questions}
\renewcommand{\emph}[1]{{\sl #1}\/} 
\newcommand{\st}{\mathbin{\colon}}
\newcommand{\defeq}{:=}
\newcommand{\dt}[1]{{\it#1}\/}
\newcommand{\iso}{\cong}
\newcommand{\tp}{\mathbin{\otimes}}
\newenvironment{YCnum}{%
\begin{enumerate}

}{\end{enumerate}\ignorespacesafterend}
\newcommand{\wstar}{\ensuremath{{\rm w}^*}}
\newcommand{\pair}[2]{\langle{#1},\,{#2}\rangle}
\newcommand{\twomat}[4]{\left[\begin{matrix} #1 & #2 \\ #3 & #4 \end{matrix} \right] }
\newcommand{\ptp}{\mathbin{\widehat{\otimes}}} 
\newcommand{\norm}[1]{\Vert{#1}\Vert}
\newcommand{\Bdd}{{\mathcal B}}
\newcommand{\Cplx}{\mathbb C}
\newcommand{\Nat}{\mathbb N}
\newcommand{\Real}{\mathbb R}
\newcommand{\VN}{\operatorname{VN}}
\newcommand{\FA}{\operatorname{A}}
\newcommand{\Cst}{\operatorname{C}^*}
\newcommand{\wswscts}{\wstar-\wstar\ continuous}
\newcommand{\ball}{\mathop{\sf ball}}
\newcommand{\cbhel}{\operatorname{Hel}\nolimits_{\rm cb}}
\newcommand{\actson}{\curvearrowright}
\newcommand{\Ind}{{\mathbb I}}
\newcommand{\Mat}{{\mathbb M}}
\newcommand{\bbG}{{\mathbb G}}
\newcommand{\bbU}{{\mathbb U}}
\newcommand{\cH}{\mathcal{H}}
\newcommand{\cK}{\mathcal{K}}
\newcommand{\cM}{\mathcal{M}}
\newcommand{\cU}{\mathcal{U}}
\newcommand{\bj}{{\bf j}}
\newcommand{\bu}{{\bf u}}
\newcommand{\bv}{{\bf v}}
\newcommand{\bx}{{\bf x}}
\newcommand{\bU}{{\bf U}}
\newcommand{\bS}{{\bf S}}
\newcommand{\bmu}{{\boldsymbol{\mu}}}
\newcommand{\Gm}{\Gamma}
\newcommand{\Om}{\Omega}
\newcommand{\gm}{\gamma}
\newcommand{\om}{\omega}
\newcommand{\lm}{\lambda}
\newcommand{\dist}{\operatorname{dist}\nolimits}
\newcommand{\ip}[2]{(#1\mid#2)}
\newcommand{\id}{\operatorname{\mathsf{id}}}
\newcommand{\Tr}{\operatorname{Tr}}
\newcommand{\ev}{\operatorname{ev}}
\newcommand{\sV}{{\sf V}}
\newcommand{\bbF}{{\mathbb F}}
\newcommand{\R}{{\mathbb R}}
\newcommand{\T}{{\mathbb T}}
\newcommand{\MAX}{\operatorname{\sf max}}
\newcommand{\MIN}{\operatorname{\sf min}}
\newcommand{\inc}{\operatorname{\sf inc}\nolimits}
\newcommand{\SUTWO}{{\rm SU(2,\Cplx)}}
\newcommand{\bprod}{\prod\nolimits^{\sf B}} 
\renewcommand{\thf}{\sc} 
\renewcommand{\dt}[1]{\textcolor{Bittersweet}{\textsf{#1}}}
\begin{document}
\maketitle

{\it Dedicated to Anthony To-Ming Lau, with thanks for all his work on behalf of the international community in Abstract Harmonic Analysis.}

\begin{abstract}
We show that, given a compact Hausdorff space $\Omega$, there is a compact group $\bbG$ and a homeomorphic embedding of $\Omega$ into $\bbG$, such that the restriction map $\FA(\bbG)\to C(\Omega)$ is a complete quotient map of operator spaces. In particular, this shows that there exist compact groups which contain infinite cb-Helson subsets, answering a question raised in~\cite{CS_AGquot}.
 A negative result from \cite{CS_AGquot} is also improved.

\medskip\noindent
MSC 2010 classification: Primary 43A30; Secondary 46L07\hfill\break
Keywords: Helson set, Fourier algebra, operator space.\end{abstract}



\begin{section}{Introduction}
The notion of a cb-Helson subset of a locally compact group was introduced in~\cite{CS_AGquot}, in connection with the study of quotients of Fourier algebras. For instance, the following result can be obtained by an easy modification of the proof of \cite[Theorem~B]{CS_AGquot}.

\begin{thm}[Corollary of work in \cite{CS_AGquot}]
Let $G$ be a {\rm SIN} group and let $J$ be a closed ideal in the Fourier algebra $\FA(G)$. Suppose that $\FA(G)/J$ is completely boundedly isomorphic to a closed subalgebra of $\Bdd(\cH)$ for some Hilbert space $\cH$. Then there is a cb-Helson subset $E\subset G$ such that $J = \{ f \in \FA(G) \colon f\vert_E=0\}$.
\end{thm}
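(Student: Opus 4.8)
The plan is to carry over the argument of \cite[Theorem~B]{CS_AGquot} with only cosmetic changes. Write $E \defeq \{x \in G \st f(x) = 0 \text{ for all } f \in J\}$ for the hull of $J$, a closed subset of $G$, and set $I(E) \defeq \{f \in \FA(G) \st f\vert_E = 0\}$, the largest closed ideal with hull $E$; then $J \subseteq I(E)$, the Gelfand spectrum of $\FA(G)/I(E)$ is $E$, and the Gelfand transform of $\FA(G)/I(E)$ is the restriction map $\FA(G)/I(E) \to C_0(E)$. Both conclusions of the theorem will follow from the single assertion
\[ (\dagger) \qquad \FA(G)/J \ \text{is completely boundedly isomorphic to a commutative}\ \Cst\text{-algebra.} \]
Indeed, $(\dagger)$ makes $\FA(G)/J$ semisimple, so its Jacobson radical $I(E)/J$ vanishes and $J = I(E)$; and then $(\dagger)$ says precisely that the restriction map $\FA(G)/I(E) \to C_0(E)$ is a complete isomorphism onto $C_0(E)$ — which is the definition of $E$ being a cb-Helson set. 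So the theorem reduces to establishing $(\dagger)$.

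The hypothesis only gives a complete isomorphism of $\FA(G)/J$ onto \emph{some} operator algebra, i.e.\ some closed subalgebra of $\Bdd(\cH)$; to get $(\dagger)$ one must rule out genuinely non-self-adjoint behaviour, such as that of a uniform algebra like the disc algebra. This is exactly what the $[{\rm SIN}]$ hypothesis buys, playing the role that a corresponding hypothesis plays in \cite[Theorem~B]{CS_AGquot}: it guarantees that $\FA(G)$ is operator biflat (this is known for every quasi-$[{\rm SIN}]$ group, hence for every $[{\rm SIN}]$ group). One then transports this biflatness to the quotient $\FA(G)/J$ and combines it with the operator-algebra hypothesis and with an operator-space refinement of Sheinberg's theorem — to the effect that a commutative operator algebra which is operator amenable (equivalently, operator biflat together with a completely bounded approximate identity) is completely boundedly isomorphic to a commutative $\Cst$-algebra — to obtain $(\dagger)$.

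The point that needs care — and the only place where ``an easy modification'' of \cite[Theorem~B]{CS_AGquot} actually has to be checked — is the interaction between $J$ and the biflatness of $\FA(G)$. One cannot simply invoke operator amenability of $\FA(G)$ itself, because a $[{\rm SIN}]$ group need not be amenable: $G$ could be a free group. Moreover the ideal $J$ need not be completely complemented in $\FA(G)$, so operator biflatness does not descend to $\FA(G)/J$ by a soft argument. The way round this is to push the relevant approximate-diagonal data through the complete quotient homomorphism $\FA(G) \ptp \FA(G) \to (\FA(G)/J) \ptp (\FA(G)/J)$ and to use the operator-algebra structure already present on $\FA(G)/J$ (in particular, to manufacture a completely bounded approximate identity there), exactly as in \cite{CS_AGquot}. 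Once $(\dagger)$ holds, the remaining assertions — semisimplicity, the identification of the spectrum with $E$, and the fact that the resulting complete isomorphism is implemented by restriction of transforms — are routine, as noted above.
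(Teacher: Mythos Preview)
The paper does not supply its own proof of this statement: it is presented in the introduction as background, with the remark that it ``can be obtained by an easy modification of the proof of \cite[Theorem~B]{CS_AGquot}''. So there is nothing in the paper to compare your argument against line by line.

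That said, your reconstruction is exactly the kind of modification the paper is pointing to. The reduction to $(\dagger)$ via semisimplicity (identifying the Jacobson radical of $\FA(G)/J$ with $I(E)/J$) is correct, as is the observation that once $(\dagger)$ holds the cb-isomorphism must be implemented by the Gelfand/restriction map up to a self-homeomorphism of $E$. You have also correctly located the one genuine issue: for a SIN group that is not amenable (e.g.\ a discrete free group) one only has operator biflatness of $\FA(G)$, not operator amenability, and since $\FA(G)$ need not have a bounded approximate identity the passage of biflatness to the quotient $\FA(G)/J$ is not automatic. Your proposed work-around --- pushing the approximate-diagonal data through the quotient and using the operator-algebra structure on $\FA(G)/J$ to produce a c.b.\ approximate identity, then invoking the operator-space Sheinberg theorem --- is the right shape, but as written it is a pointer to \cite{CS_AGquot} rather than an argument; a reader would need to open that paper to verify the step. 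Given that the present paper itself defers entirely to \cite{CS_AGquot}, this is an appropriate level of detail.
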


We defer the definition of a cb-Helson set to Section~\ref{s:prelim}. For now, we note that such sets appear to be rather hard to come by: finite subsets of locally compact groups have the cb-Helson property, but hitherto no infinite cb-Helson sets were known to exist.
Uncountable Helson sets can be found in $\T$ and $\R$, using classical constructions of Fourier analysis (see e.g.~\cite[Theorem 5.6.6]{Rudin_FAG}). By Herz's restriction theorem, these give uncountable Helson sets in any locally compact group that contains a closed copy of either $\T$ or $\R$. In contrast, every cb-Helson subset of a locally compact abelian group must be finite (see \cite[Theorem~C]{CS_AGquot}).

The purpose of this note is twofold. Firstly, we strengthen \cite[Theorem C]{CS_AGquot} as follows:

\begin{thm}\label{t:amen-as-discrete}
Let $G$ be a locally compact group, and $G_d$ the same group equipped with the discrete topology. Suppose $G_d$ is amenable. Then every cb-Helson subset of $G$ is finite.
\end{thm}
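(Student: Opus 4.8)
The plan is to restate the cb-Helson condition in dual form, transport the resulting operator-space embedding from $\VN(G)$ into a \emph{nuclear} $C^*$-algebra attached to $G_d$, and then invoke a rigidity property of maximal operator spaces. For Step~1, recall (following \cite{CS_AGquot}) that $E$ is cb-Helson precisely when the restriction map $r_E\colon \FA(G)\to \MIN C_0(E)$ --- with $C_0(E)$ carrying its canonical (minimal) operator space structure --- is a complete quotient map up to a constant $C$. Taking adjoints, $r_E^*$ is a completely isomorphic embedding of $\MAX M(E)=(\MIN C_0(E))^*$ into $\VN(G)=\FA(G)^*$ with $r_E^*(\delta_x)=\lm_G(x)$, whose range is the weak*-closed linear span of $\{\lm_G(x)\st x\in E\}$. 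Since the atomic part is a norm-one projection of $M(E)$ onto $\ell^1(E)$, and since a norm-one complemented subspace of a maximal operator space is again maximal, restricting $r_E^*$ to norm closures shows that
\[
X := \clin\{\lm_G(x)\st x\in E\}\ \subseteq\ \VN(G)
\]
is completely isomorphic, with constant depending only on $C$, to $\MAX\ell^1(E)$.

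\textbf{Step 2: transport into $\Cst_r(G_d)$.} This is where amenability of $G_d$ is used, and it is used twice. First, $\Cst_r(G_d)=\Cst(G_d)$, so the unitary representation $\lm_G|_{G_d}$ of the discrete group $G_d$ factors through the reduced $C^*$-algebra: there is a $*$-homomorphism $\pi\colon\Cst_r(G_d)\to\VN(G)$ with $\pi(\delta_x)=\lm_G(x)$ for $x\in G_d$. Secondly, $\Cst_r(G_d)$ is nuclear. Put $X_0:=\clin\{\delta_x\st x\in E\}\subseteq\Cst_r(G_d)$ and let $j\colon\MAX\ell^1(E)\to X_0$ be the canonical map $\delta_x\mapsto\delta_x$; it is a complete contraction, since bounded maps out of a maximal operator space are automatically completely bounded with the same norm. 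Composing $j$ with $\pi$ and with the complete isomorphism $X\cong\MAX\ell^1(E)$ from Step~1 gives a completely bounded endomorphism of $\MAX\ell^1(E)$ which fixes every $\delta_x$, hence is the identity. Therefore $j$ has a completely bounded left inverse, so $j$ is a complete isomorphism of $\MAX\ell^1(E)$ onto $X_0$, again with constant controlled by $C$.

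\textbf{Step 3: the rigidity input.} As a subspace of the nuclear --- hence exact --- $C^*$-algebra $\Cst_r(G_d)$, the operator space $X_0$ is exact, with exactness constant bounded independently of $E$; by Step~2 the same holds for $\MAX\ell^1(E)$. If $E$ were infinite, then for every $n$ the space $\MAX\ell^1_n$ would embed completely isometrically (indeed, norm-one complementedly) into $\MAX\ell^1(E)$, whereas a maximal operator space over an infinite-dimensional Banach space is not exact --- such spaces are not even locally reflexive, and exactness implies local reflexivity. This contradiction forces $E$ to be finite.

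\textbf{Where the difficulty lies.} The substantial ingredients are the rigidity fact of Step~3 (non-exactness of infinite-dimensional maximal operator spaces), which plays here the role that ``$\VN(G)$ is commutative, so all of its subspaces are minimal'' plays in the abelian argument of \cite{CS_AGquot}; and the use in Step~2 of amenability of $G_d$ to realise the group elements $\delta_x$ inside an honest, and nuclear, $C^*$-algebra --- for non-discrete $G$ the operators $\lm_G(x)$ simply do not belong to $\Cst_r(G)$, so the passage to the discretization is essential. Everything else is operator-space bookkeeping; the only delicate point is to keep track of the maximal structure in Step~1 when moving between $M(E)$ and $\ell^1(E)$ and between the weak* and norm topologies.
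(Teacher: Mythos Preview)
Your proof is correct and follows essentially the same strategy as the paper: both arguments identify the norm-closed span of $\{\lambda_G(x):x\in E\}$ (or a copy of it) as a subspace of a nuclear $\Cst$-algebra obtained from the amenability of $G_d$, and then invoke Pisier's lower bound $\dist_{cb}(\MAX\ell_1^n,V)\geq n/(2\sqrt{n-1})$ for $n$-dimensional $V\subseteq\cK(\ell_2)$. The differences are organizational. The paper works directly inside $\VN(G)$, observing that $\Cst_\delta(G)$ --- the norm closure of $\operatorname{lin}\{\lambda_x:x\in G\}$ in $\Bdd(L^2(G))$ --- is a quotient of $\Cst(G_d)$ and hence nuclear; you instead transport to $\Cst_r(G_d)$ via the universal property, which is equivalent but a slight detour. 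More notably, the paper proves a quantitative statement, namely $\cbhel(F)\geq |F|/(2\sqrt{|F|-1})$ for every finite $F\subset G$, and then deduces the theorem from monotonicity of cb-Helson constants under passage to closed subsets; your direct contradiction argument is tidier for the bare qualitative statement but forfeits this extra information.

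One caution about Step~3: the clause ``exactness implies local reflexivity'' is a theorem for $\Cst$-algebras (Kirchberg), but it is not the right lever at the level of general operator spaces, and in any case it is unnecessary here. The non-exactness of $\MAX\ell^1(E)$ for infinite $E$ follows immediately from the Pisier estimate you are implicitly relying on anyway: each $\MAX\ell_1^n$ sits completely isometrically (and $1$-complementedly) inside $\MAX\ell^1(E)$, and its cb-distance to any subspace of $\cK(\ell_2)$ tends to infinity with~$n$. You should replace the local-reflexivity sentence with this direct appeal; once you do, your argument and the paper's are the same in substance.
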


\noindent
(This applies, in particular, to any locally compact $G$ which contains a solvable subgroup of finite index.)
Secondly, answering a question raised in \cite{CS_AGquot}, we show that infinite cb-Helson sets do exist, albeit inside some ``pathologically large'' groups.

\begin{thm}\label{t:realization}
Let $\Omega$ be a compact Hausdorff space. Then there is a compact, connected group $\bbG$ and a cb-Helson subset of $\bbG$ that is homeomorphic to $\Omega$.
\end{thm}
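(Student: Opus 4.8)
The plan is to realize $\bbG$ as a large product of unitary groups, to embed $\Omega$ into it by means of a separating family of continuous unitary-valued functions, and then to verify the cb-Helson property by dualizing the restriction map. Concretely, I would set
\[
\bbG \defeq \prod_{m\ge 1}\ \prod_{f\in C(\Omega,\,U(m))} U(m),
\]
the inner product ranging over \emph{all} continuous maps $f\colon\Omega\to U(m)$. Each $U(m)$ is a compact connected group, and an arbitrary product of compact connected groups is a compact connected group, so $\bbG$ is of the required type. Define $\iota\colon\Omega\to\bbG$ by $\iota(\omega)_{(m,f)}=f(\omega)$. This map is continuous, and it is injective because $C(\Omega,\T)$ already separates the points of the compact Hausdorff space $\Omega$ (compose an Urysohn function $\Omega\to[0,1]$ with $t\mapsto e^{\pi i t}$). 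As $\Omega$ is compact and $\bbG$ Hausdorff, $\iota$ is a homeomorphism onto the closed set $\iota(\Omega)$, and it remains to prove that the restriction homomorphism
\[
r\colon\FA(\bbG)\longrightarrow C(\iota(\Omega))\iso C(\Omega),\qquad r(a)=a\circ\iota,
\]
is a complete quotient map of operator spaces; this is exactly the statement that $\iota(\Omega)$ is a cb-Helson subset of $\bbG$.

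Since $r$ factors as $\FA(\bbG)\hookrightarrow C(\bbG)\twoheadrightarrow C(\iota(\Omega))$ and both maps are complete contractions (point evaluations on $\FA(\bbG)$ are completely contractive, and a surjective $*$-homomorphism between commutative $\Cst$-algebras is a complete quotient map), $r$ is a complete contraction; hence it is a complete quotient map precisely when its adjoint $r^*\colon M(\Omega)\to\VN(\bbG)$ is a complete isometry, where $M(\Omega)=C(\Omega)^*$ carries the operator space structure dual to that of the commutative $\Cst$-algebra $C(\Omega)$. From the Peter--Weyl decomposition $\VN(\bbG)=\bigoplus_{\pi\in\widehat{\bbG}}\Bdd(\cH_\pi)$ one reads off, for $[\mu_{ij}]\in M_n(M(\Omega))$,
\[
\Norm{r^*_n([\mu_{ij}])}_{M_n(\VN(\bbG))}=\sup_{\pi\in\widehat{\bbG}}\ \Norm{\left[\ \int_\Omega\pi(\iota(\omega))\,d\mu_{ij}(\omega)\ \right]_{i,j}}_{M_n(\Bdd(\cH_\pi))},
\]
whereas, since $M_n$ of the dual operator space $M(\Omega)$ consists of the completely bounded maps $C(\Omega)\to M_n$ and $M_k(C(\Omega))=C(\Omega,M_k)$, unwinding the definition of the cb-norm gives
\[
\Norm{[\mu_{ij}]}_{M_n(M(\Omega))}=\sup_{k\ge 1}\ \sup_{G\in C(\Omega,\,\ball M_k)}\ \Norm{\left[\ \int_\Omega G(\omega)\,d\mu_{ij}(\omega)\ \right]_{i,j}}_{M_n(M_k)}.
\]
Only the inequality $\Norm{[\mu_{ij}]}_{M_n(M(\Omega))}\le\Norm{r^*_n([\mu_{ij}])}_{M_n(\VN(\bbG))}$ needs proof, the reverse being complete contractivity of $r^*$.

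For this I would use unitary dilation. Given $k\ge 1$ and $G\in C(\Omega,\ball M_k)$, set $D(\omega)=(I-G(\omega)^*G(\omega))^{1/2}$, $D'(\omega)=(I-G(\omega)G(\omega)^*)^{1/2}$ and $W(\omega)=\twomat{G(\omega)}{D'(\omega)}{D(\omega)}{-G(\omega)^*}\in U(2k)$; then $W\colon\Omega\to U(2k)$ is continuous, because $A\mapsto A^{1/2}$ is norm-continuous on the positive matrices (no strict-contraction hypothesis on $G$ is required). Thus $(2k,W)$ is one of the coordinates of $\bbG$. Let $\pi_W\in\widehat{\bbG}$ be the pull-back to $\bbG$, along the corresponding coordinate projection, of the standard representation of $U(2k)$, so that $\pi_W(\iota(\omega))=W(\omega)$; writing $P=[\,I_k\ \ 0\,]\colon\Cplx^{2k}\to\Cplx^k$ and $Q=P^*$ we have $G(\omega)=P\,W(\omega)\,Q$ for every $\omega$, hence $\int_\Omega G\,d\mu_{ij}=P\bigl(\int_\Omega\pi_W(\iota)\,d\mu_{ij}\bigr)Q$. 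Compressions do not increase matrix norms, so
\[
\Norm{\left[\ \int_\Omega G\,d\mu_{ij}\ \right]_{i,j}}_{M_n(M_k)}\le\Norm{\left[\ \int_\Omega\pi_W(\iota)\,d\mu_{ij}\ \right]_{i,j}}_{M_n(\Bdd(\cH_{\pi_W}))}\le\Norm{r^*_n([\mu_{ij}])}_{M_n(\VN(\bbG))};
\]
taking the supremum over $G$ and $k$ gives the required inequality. Hence $r^*$ is a complete isometry, $r$ is a complete quotient map, and $\iota(\Omega)$ is a cb-Helson subset of $\bbG$ homeomorphic to $\Omega$.

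The main obstacle, in my view, is recognizing that one must pass to the adjoint: a naive attempt to lift a contractive matrix $[b_{ij}]\in M_n(C(\Omega))$ to $M_n(\FA(\bbG))$ through a single representation produces an element whose $M_n(\FA(\bbG))$-norm is of order $n^2$ rather than $O(1)$, so a coordinatewise lifting cannot work and one is forced onto the dual side. Once that is accepted, the unitary-dilation step is what renders contractions in $M_n(C(\Omega))$ ``visible'' through the unitary values of representations of $\bbG$, and the sole purpose of building $\bbG$ out of all of $C(\Omega,U(m))$ is to guarantee that enough such representations occur; the residual difficulty is careful bookkeeping with the operator-space matrix norms in the two displayed identities (and, should one prefer a semisimple building block, replacing $U(m)$ by $SU(m)$ and composing the dilation with $W\mapsto\mathrm{diag}(W,\overline{\det W})$ changes nothing of substance).
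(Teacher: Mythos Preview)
Your proof is correct and follows essentially the same route as the paper: the group $\bbG$, the embedding $\iota$, and the key unitary-dilation step (your $W$, the paper's $U_a$) are identical. The only organizational difference is that the paper first constructs the complete isometry $J\colon M(\Omega)\to\cM=\bprod_{n,\bu}\Mat_n$ and then separately verifies it is \wstar-\wstar\ continuous so as to obtain a pre-adjoint, whereas you start from the restriction map $r$ and take its Banach-space adjoint, which lands automatically in $\VN(\bbG)$ and is \wstar-\wstar\ continuous for free; correspondingly you invoke Peter--Weyl to read off the $M_n(\VN(\bbG))$ norm, while the paper identifies $\VN_\theta(\bbG)$ with $\theta(\cM)$ by hand. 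These are stylistic rather than substantive differences.
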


Theorem~\ref{t:realization} can be thought of as an analogue of a folklore result for ``classical'' Helson sets, which roughly speaking says that each compact Hausdorff space $\Om$ can be homeomorphically embedded as a Helson set inside \emph{some} compact abelian group. Note that if we restrict attention to particular compact abelian groups then there can be topological restrictions on the possible Helson subsets: for instance, a Helson subset of $\T$ cannot contain any closed subset homeomorphic to $[0,1]$ (cf.~\cite[Remark 5.6.8]{Rudin_FAG}).

One drawback of Theorem~\ref{t:realization} is that we create groups specially for the purpose of containing certain spaces as cb-Helson subsets, rather than finding ``natural'' examples of groups that contain infinite cb-Helson subsets.
In particular, the following questions remain open (the second of which was suggested by the referee):

\begin{qnsstar}\
\begin{enumerate}
\item Is there a connected, linear Lie group that contains an infinite cb-Helson subset?
\item Does there exist an infinite index set $\Ind$ and some $n\in\Nat$ such that $\prod_{i\in\Ind} \bbU_n$ contains an infinite cb-Helson subset?
\end{enumerate}
\end{qnsstar}
Let us say something about the proofs. Theorem~\ref{t:amen-as-discrete} is a quick consequence of some hard (but standard) results in the theory of operator spaces; its proof will be given in Section~\ref{s:cb-Helson constants}. In contrast, the proof of Theorem~\ref{t:realization} is much longer, but only uses basic tools. The arguments are not difficult once one thinks of the right idea; the key part is based on the standard embedding of a dual operator space inside a product of matrix algebras. We will actually prove a more precise statement, given as Theorem~\ref{t:embedding theorem}.
\end{section}

\begin{section}{Notation and preliminaries}\label{s:prelim}

\subsection{Notation and terminology}
If $X$ is a Banach space then $\ball(X)$ denotes its \emph{closed} unit ball. If $B$ is a unital $\Cst$-algebra then $\bU(B)$ denotes its unitary group. If $\cH$ is a Hilbert space then we write $\cU(\cH)$ instead of $\bU(\Bdd(\cH))$.
We use $\bbU_n$ as an abbreviation for $\cU(\Cplx^n)$, i.e.~the usual group of $n\times n$ unitary matrices. Similarly, we write $\Mat_n$ as an abbreviation for $M_n(\Cplx)$.

Given a family $(X_i)_{i\in\Ind}$ of Banach spaces, we shall write $\bprod_{i\in\Ind} X_i$ for the Banach space direct product of this family, i.e.~what is sometimes called the $\ell_\infty$-direct sum. This is just to avoid ambiguity since we shall also be dealing with sets or groups that arise as Cartesian products (in the usual sense) of sets or groups.

All topological groups are assumed to be {\it a priori} Hausdorff; if $G$ is a topological group, $G_d$ denotes the same group equipped with the discrete topology.

The abbreviation WOT stands for ``weak operator topology'' (on $\Bdd(\cH)$ for some given Hilbert space $\cH$).
 Given  a topological group $G$ and a Hilbert space~$\cH$, a representation $\pi:G\actson\cH$ is said to be \dt{unitary} if $\pi(G)\subseteq \cU(\cH)$, and said to be \dt{WOT-continuous} if $\pi:G\to (\Bdd(\cH),{\rm WOT})$ is continuous as a map between topological spaces. For most of what we do $G$ is locally compact, but it is useful to have terminology which is well-defined without having to establish local compactness.

Given a family $(\sigma_i)_{i\in\Ind}$ of unitary representations of a common group $G$, with $\sigma_i:G\actson\cH_i$, we define the \dt{direct product} of this family (sometimes called the \dt{direct sum}) as follows. Let $\cH\defeq \hbox{$\ell^2$-}\bigoplus_{i\in\Ind}\cH_i$ be the $\ell^2$-direct sum of the representation spaces. Given $\bx=(x_i)_{i\in\Ind}\in G$ and ${\boldsymbol\xi}=(\xi_i)_{i\in\Ind}\in\cH$, let
\[
[ \sigma(\bx)(\boldsymbol{\xi})]_i \defeq \sigma_i(x_i)(\xi_i) \qquad(i\in\Ind).
\]
Clearly $\sigma$ is also a unitary representation: we often denote it by $\prod_{i\in\Ind}\sigma_i$.

\begin{rem}
\label{r:product-of-WOT-cts}
Let $G$ be a topological group. Then the direct product of a family of WOT-con\-tin\-uous unitary representations of $G$ is itself WOT-continuous.
In the locally compact setting, this is often deduced as an application of the correspondence between WOT-continuous representations of a locally compact group $G$ and non-degenerate $*$-representations of $L^1(G)$. However, it is not hard to give a proof which works for arbitrary topological groups; since we did not see this proof in the sources we consulted, it is included in the appendix for the reader's interest.
\end{rem}

If $\pi:G\to\cU(\cH_\pi)$ is a unitary representation of a (topological) group, 
we denote by $\VN_\pi(G)$ the WOT-closed subalgebra of $\Bdd(\cH_\pi)$ generated by the subset $\pi(G)$.
Given $\xi,\eta\in\cH_\pi$ we denote by $\xi*_\pi\eta$ the function $g\mapsto \ip{\pi(g)\xi}{\eta}$.
If $\pi$ is furthermore WOT-continuous, then each function of the form $\xi*_\pi\eta$ belongs to $C_b(G)$, and the map $\xi\tp\overline{\eta} \mapsto \xi*_\pi\eta$ gives rise to a bounded linear map $\cH_\pi\ptp \overline{\cH_\pi}\to C_b(G)$.
 If, furthermore, we assume $G$ is locally compact, then we can appeal to the machinery of coefficient spaces as in e.g.~Arsac's thesis~\cite{Arsac_Lyon76}: in particular, $\VN_\pi(G)$ has a canonical predual $\FA_\pi(G)$, which is the image of the map $\cH_\pi\ptp\overline{\cH_\pi} \to C_b(G)$  equipped with the quotient norm.

\subsection{Helson sets and cb-Helson sets}
Let $E$ be a closed subset of a locally compact group $G$; let $\FA(G)$ denote the Fourier algebra of $G$,
as defined in \cite{Eym_BSMF64}.
 (Recall that if $G$ is locally compact and abelian, then $\FA(G)$ is naturally identified with the space of Fourier transforms of integrable functions on the dual group $\widehat{G}$.) Restriction of functions defines a contractive algebra homomorphism $\FA(G)\to C_0(E)$, whose kernel we denote by $I_G(E)$; let $\FA_G(E)$ be the quotient algebra $\FA(G)/I_G(E)$. There is a natural, injective homomorphism $\inc_{G,E}: \FA_G(E) \to C_0(E)$, which in general need not be surjective.

\begin{dfn}
We say that $E$ is a \dt{Helson subset of $G$} or a \dt{Helson set} for short, if $\inc_{G,E} :\FA(G)\to C_0(E)$ is surjective, hence an isomorphism of Banach spaces.
\end{dfn}

Equivalently: $E$ is a Helson subset of $G$ if and only if each continuous function on $E$ that vanishes at infinity can be extended to some function in the Fourier algebra of $G$.

Most of the operator space theory needed for this paper will be explained as and when we need it: all necessary background that is not explained here can be found in the standard references \cite{BleMer_book,ER_OSbook,Pis_OSbook}.
Let us recall briefly what is meant by the ``natural'' operator space structures on the spaces $C_0(G)$ and $\FA_G(E)$. Since $C_0(E)$ and $\VN(G)$ are $\Cst$-algebras, they have canonical operator space structures; $\VN(G)$ is a weak-star closed subspace of $\Bdd(L^2(G))$, hence its predual
 $\FA(G)$ has an operator space structure, whose dual (as an operator space) coincides with the original operator space structure on $\VN(G)$; and finally, $\FA_G(E)$ is a quotient of $\FA(G)$, so it carries a natural quotient operator space structure. 


\begin{dfn}\label{d:cb-Helson}
We say $E$ is a \dt{cb-Helson subset of $G$}, or a \dt{cb-Helson set} for short, if $\inc_{G,E}: \FA_G(E)\to C_0(E)$ is an isomorphism of operator spaces. 
\end{dfn}

\begin{rem}
Since the natural operator space structure on $C_0(E)$ is minimal, the map $\inc_{G,E}:\FA_G(E)\to C_0(E)$ is automatically completely bounded. (In fact, it is completely \emph{contractive}.) Thus $E$ is a cb-Helson set if and only if $\inc_{G,E}:\FA_G(E)\to C_0(E)$ is surjective and has completely bounded inverse.
\end{rem}

\begin{subsection}{Warning remarks on terminology}\label{ss:chips-vs-chips}
There seems to be some disagreement in the literature over the precise definition of Helson sets in non-compact groups. The original result of Helson that motivated the terminology ``Helson set'' was in the context of closed subsets of compact abelian groups. However, once one moves to locally compact abelian groups, terminology seems to differ. Some authors require that Helson subsets be compact as part of the definition; our convention, that they need not be compact, is in accordance with \cite{Herz_helson}.

There is also the notion of a \dt{Sidon subset} $E$ in a discrete abelian group $\Gamma$, which roughly speaking requires that functions on $E$ extend to elements of the Fourier--Stieltjes algebra ${\rm B}(\Gamma)$ with control of norm. It turns out that every Sidon subset of a discrete abelian group is automatically a Helson subset in our sense (see \cite[Theorem 5.7.3]{Rudin_FAG}). We prefer to keep the two concepts distinct.

It is actually easy to find examples of infinite cb-Sidon sets (where the definition is the obvious analogue of the cb-Helson condition). Let $\bbF_\infty$ denote the free group on a countably infinite number of generators, and let $E$ denote the set of generators: then the restriction map ${\rm B}(\bbF_\infty)\to \ell^\infty(E)$ is a complete quotient map, since it can be viewed as the adjoint of the known complete isometry $\MAX \ell^1(E) \hookrightarrow \Cst(\bbF_\infty)$.
\end{subsection}

\end{section}

\begin{section}{Calculations with cb-Helson constants}\label{s:cb-Helson constants}

Classically, a useful tool for studying Helson sets has been to quantify ``how Helson they are'', by associating a ``Helson constant'' to each closed subset of a given group. This has an obvious analogue for cb-Helson sets, as given by the following definition from \cite{CS_AGquot}.

\begin{dfn}[cb-Helson constants]
Let $E$ be a cb-Helson subset of a locally compact group~$G$. The \dt{cb-Helson constant of $E$}, which we denote by $\cbhel(E)$, is defined to be $\norm{\inc_{G,E}^{-1}}_{cb}$.
\end{dfn}

We adopt the convention that if $E$ is a closed subset of $G$ and not a cb-Helson subset, then $\cbhel(E)=+\infty$.

\begin{rem}\label{r:misc remarks}
 Let $E$ be a closed subset of $G$.
\begin{YCnum}
\item
By basic operator space theory, $\cbhel(E)=1$ if and only if the restriction map $\FA(G)\to C_0(E)$ is a complete quotient map of operator spaces.
\item\label{li:subsets}
If $F$ is a closed subset of $E$, then it follows from Tietze's extension theorem that $\cbhel(F)\leq\cbhel(E)$. (See \cite[Lemma 2.2]{CS_AGquot} for the details.)
\end{YCnum}
\end{rem}

Now, by Remark~\ref{r:misc remarks}\ref{li:subsets}, Theorem~\ref{t:amen-as-discrete} will follow from the following result on cb-Helson constants of finite subsets in certain groups.

\begin{prop}\label{p:bound for finite}
Let $G$ be a locally compact group such that $G_d$ is amenable. Let $F\subset G$ be a finite subset of size $n\geq 2$. Then $\cbhel(F) \geq n/ (2\sqrt{n-1})$.
\end{prop}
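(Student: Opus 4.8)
The plan is to dualise. Since $C_0(F)=C(F)$ is a commutative $\Cst$-algebra and therefore carries its minimal operator space structure, while $\FA_G(F)=\FA(G)/I_G(F)$ carries the quotient structure, the adjoint of $\inc_{G,F}$ is the map $u\colon \MAX\ell^1_n=C(F)^*\to \FA_G(F)^*=\operatorname{span}\{\lambda_G(g):g\in F\}\subseteq\VN(G)$ sending the point mass at $g$ to $\lambda_G(g)$, where $\lambda_G$ denotes the left regular representation; this uses the coefficient-space description of $\FA(G)$ recalled in Section~\ref{s:prelim}, together with the standard fact that for finite $F$ the restriction map $\FA(G)\to C(F)$ is onto, so that the right-hand side really is an $n$-dimensional operator subspace of $\VN(G)$. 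The map $u$ is completely contractive (being the adjoint of the completely contractive map $\inc_{G,F}$), and $\cbhel(F)=\norm{u^{-1}}_{cb}$. So it is enough to exhibit, for each $\varepsilon>0$, an integer $m$ and an $X\in\Mat_m\bigl(\operatorname{span}\{\lambda_G(g):g\in F\}\bigr)$ with $\norm{(\id_m\otimes u^{-1})X}_{\Mat_m(\MAX\ell^1_n)}\big/\norm{X}_{\Mat_m(\VN(G))}\ge n/(2\sqrt{n-1}+\varepsilon)$.

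For the witness I would write $F=\{g_1,\dots,g_n\}$ and take $X=\sum_{j=1}^n x_j\otimes\lambda_G(g_j)$ with $x_1,\dots,x_n$ independent Haar-random unitaries in $\Mat_m$. The numerator is easy and does not see $G$: using $\MAX\ell^1_n=(\MIN\ell^\infty_n)^*$ one has $\norm{\sum_j x_j\otimes e_j}_{\Mat_m(\MAX\ell^1_n)}=\sup\{\norm{\sum_j x_j\otimes c_j}:N\ge1,\ c_j\in\ball(\Mat_N)\}$, which is clearly at most $n$; testing against $c_j=\overline{x_j}$ (the entrywise complex conjugate) and the maximally entangled unit vector $m^{-1/2}\sum_i e_i\otimes e_i$, and using $(x\otimes\overline x)\bigl(\sum_i e_i\otimes e_i\bigr)=\sum_i e_i\otimes e_i$ for any unitary $x$, shows the supremum equals $n$. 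The denominator is the crux and is where the hypothesis enters: $G_d$ amenable forces $G$ amenable (restrict a left-invariant mean on $\ell^\infty(G)$ to $C_b(G)$), hence $\Cst_\lambda(G)$ is nuclear, in particular exact; by the strong asymptotic freeness of Haar unitaries (Haagerup--Thorbj{\o}rnsen; Collins--Male), valid in the presence of an exact $\Cst$-algebra, $\norm{\sum_j x_j\otimes\lambda_G(g_j)}_{\Mat_m\otimes_{\min}\Cst_\lambda(G)}$ converges, almost surely as $m\to\infty$, to $\norm{\sum_j s_j\otimes\lambda_G(g_j)}_{\Cst_\lambda(\bbF_n)\otimes_{\min}\Cst_\lambda(G)}$, where $s_1,\dots,s_n$ are the canonical unitary generators of $\Cst_\lambda(\bbF_n)$. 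Fell's absorption principle, applied to the unitary representation $g_j\mapsto\lambda_G(g_j)$ of the free group $\bbF_n$, makes $\sum_j s_j\otimes\lambda_G(g_j)$ unitarily equivalent to $\bigl(\sum_j s_j\bigr)\otimes 1$, and $\norm{\sum_{j=1}^n s_j}_{\Cst_\lambda(\bbF_n)}=2\sqrt{n-1}$ by Kesten's formula. So for suitable $m$ and a suitable realisation the denominator is $\le 2\sqrt{n-1}+\varepsilon$, and dividing completes the proof.

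The main obstacle is precisely the denominator estimate. The weak (distributional) convergence of independent random unitaries to free Haar unitaries is classical and elementary, but the convergence of the operator \emph{norms} is the deep input — this is the ``hard but standard'' ingredient, and it is exactly here that amenability is used, through exactness of $\Cst_\lambda(G)$. All the remaining pieces — the duality of the first paragraph, the maximal-entanglement computation of the numerator, Fell absorption, and Kesten's value $2\sqrt{n-1}$ for the norm of a sum of $n$ free generating unitaries — are routine.
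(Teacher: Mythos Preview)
There is one genuine slip in the denominator estimate. For non-discrete $G$ the translation operators $\lambda_G(g)$ do \emph{not} lie in the reduced group $\Cst$-algebra $\Cst_\lambda(G)$: for instance, if $G=\T$ then $\Cst_\lambda(\T)\cong c_0(\Zahl)$, which contains no unitaries at all. Thus the expression $\norm{\sum_j x_j\otimes\lambda_G(g_j)}_{\Mat_m\otimes_{\min}\Cst_\lambda(G)}$ is not well defined, and your chain ``$G_d$ amenable $\Rightarrow$ $G$ amenable $\Rightarrow$ $\Cst_\lambda(G)$ nuclear'', while each implication is true, is beside the point. The correct ambient algebra is $\Cst_\delta(G)$, the norm-closed subalgebra of $\Bdd(L^2(G))$ generated by $\{\lambda_G(x):x\in G\}$; the elements $\lambda_G(g_j)$ certainly live there, and $\Cst_\delta(G)$ is nuclear (hence exact) because it is a quotient of $\Cst(G_d)$, which is nuclear by Lance's theorem since $G_d$ is amenable. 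With this correction your strong-convergence and Fell-absorption argument goes through unchanged, since the $\Mat_m(\VN(G))$-norm of $X$ agrees with its $\Mat_m\otimes_{\min}\Cst_\delta(G)$-norm.

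Once patched, your proof is correct but takes a different route from the paper's. The paper also dualises, identifying $\FA_G(F)^*$ with $\operatorname{span}\{\lambda_G(g):g\in F\}\subset\Cst_\delta(G)$, but then invokes two black boxes: first, that finite-dimensional subspaces of nuclear $\Cst$-algebras can be approximated in cb-Banach--Mazur distance by subspaces of $\cK(\ell_2)$; second, Pisier's bound $\dist_{cb}(\MAX\ell_1^n,V)\ge n/(2\sqrt{n-1})$ for every $n$-dimensional $V\subset\cK(\ell_2)$. Your argument is in effect a direct proof of Pisier's bound specialised to the case where the spanning set consists of unitaries: the Haar-random-unitary witness, the maximally-entangled-state computation giving the numerator $n$, and the value $2\sqrt{n-1}$ obtained via Fell absorption and Kesten are precisely the ingredients of Pisier's original argument. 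The paper's route is shorter to write down; yours is more self-contained and makes the origin of the constant $2\sqrt{n-1}$ explicit.
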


\begin{proof}
To ease notation, we abbreviate $\inc_{G,F}$ to $\inc_F$ and $\FA_G(F)$ to $\FA(F)$. We say ``identified'' as an abbreviation for ``identified completely isometrically as operator spaces''.

We identify $C(F)$ with $\MIN\ell_\infty^n$ and hence identify $C(F)^*$ with $\MAX\ell_1^n$. Since $\FA(G)\to\FA(F)$ is a complete quotient map (by definition), we may identify $\FA(F)^*$ with the closed linear span inside $\VN(G)$ of $\{\lambda_x \colon x\in F\}$.
Now consider $\Cst_\delta(G)$, the norm-closed subalgebra of $\Bdd(L_2(G))$ generated by the set of left translation operators $\{\lambda_x \colon x \in G\}$. Since $G_d$ is amenable, $\Cst(G_d)$ is nuclear \cite{Lance_nuc}; and since $\Cst(G_d)$ quotients onto $\Cst_\delta(G)$, it follows that $\Cst_\delta(G)$ is also nuclear. Thus $\FA(F)^*$ is identified with a closed subspace of a nuclear $\Cst$-algebra, and so (see \cite[Corollary 12.6]{Pis_OSbook})
\begin{equation}\label{eq:bodger}
 \inf_{V\subset \cK(\ell_2)} \dist_{cb}(\FA(F)^*, V) = 1
\end{equation}
where the infimum is taken over all $n$-dimensional subspaces of $\cK(\ell_2)$.

Let $V\subset\cK(\ell_2)$ be an $n$-dimensional subspace of $\cK(\ell_2)$.
Tnen by a result of Pisier (\cite[Theorem~7]{Pis_exactOS}),
\begin{equation}\label{eq:badger}
 \dist_{cb}(\MAX\ell_1^n, V) \geq \frac{n}{2\sqrt{n-1}}.
\end{equation}
By basic properties of cb-Banach--Mazur distance,
\[ \dist_{cb}(\MAX\ell_1^n,V) \leq \dist_{cb}(\MAX\ell_1^n,\FA(F)^*)\dist_{cb}(\FA(F)^*,V) .\]
Combining this with \eqref{eq:bodger} and \eqref{eq:badger} yields $n/(2\sqrt{n-1})\leq \dist_{cb}(\MAX\ell_1^n,\FA(F)^*)$. Since
\[ \dist_{cb}(\MAX\ell_1^n,\FA(F)^*) =\dist_{cb}(\MIN \ell_\infty^n, \FA(F)) \leq \norm{\inc_F}_{cb}\norm{\inc_F^{-1}}_{cb} = \cbhel(F) \]
the proof is complete.
\end{proof}

\begin{rem}
The theme of getting lower bounds on the Helson constants of finite sets is very standard in the study of the classical Helson condition. For instance, to prove that a closed non-trivial  arc in $\T$ is not Helson, one considers larger and larger finite subsets, each of which is an ``arithmetic progression'', and then shows that the Helson constant of a finite arithmetic progression in $\T$ tends to infinity as the number of points in the progression grows.
\end{rem}

\begin{rem}
The proof of Proposition~\ref{p:bound for finite}, and hence the proof of Theorem~\ref{t:amen-as-discrete}, works for any locally compact group $G$ such that $\Cst_\delta(G)$ is an exact $\Cst$-algebra. However, I do not know if this actually encompasses any new examples not covered by Theorem~\ref{t:amen-as-discrete}.
In fact: after this paper was submitted, I learned of the preprint~\cite{ruan-wiersma_preprint} by Ruan and Wiersma. As a special case of their results, they show that whenever $G$ is a locally compact amenable group for which $G_d$ is not amenable, such as ${\rm SO}(n,\Real)$ for all $n\geq 3$, the algebra $\Cst_\delta(G)$ is not even locally reflexive, hence {\it a priori} cannot be exact. See Example 4.1(3) and Theorem 4.3 in their paper for further details.
\end{rem}

\end{section}

\begin{section}{Theorem~\ref{t:realization}: a stronger version and some initial reductions}

Our goal in the rest of this paper is to prove the following result, which implies Theorem~\ref{t:realization}.

\begin{thm}[Embedding theorem, precise version]\label{t:embedding theorem}
Let $\Om$ be a compact Hausdorff space. Then there exists a family $(n(i))_{i\in\Ind}$ of positive integers such that, when we define $\bbG$ to be the group $\prod_{i\in\Ind} \bbU_{n(i)}$ with the product topology, there is a continuous injection from $\bj:\Om\to\bbG$ such that $\cbhel(\bj(\Om))=1$. 
\end{thm}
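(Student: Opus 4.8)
The plan is to work entirely on the dual side. By standard operator space duality (see \cite{BleMer_book,ER_OSbook}) together with Remark~\ref{r:misc remarks}(i), it suffices to produce a family $(n(i))_{i\in\Ind}$ of positive integers and a continuous map $\bj$ from $\Om$ into $\bbG\defeq\prod_{i\in\Ind}\bbU_{n(i)}$ (with the product topology) such that the map
\[
\Theta\colon M(\Om)\longrightarrow\bprod_{i\in\Ind}\Mat_{n(i)},\qquad
\Theta(\mu)=\int_\Om\bj(\om)\,d\mu(\om),
\]
is a complete isometry. Indeed, granting such $\bj$, observe that $\Theta(\delta_\om)=\bj(\om)$, so injectivity of $\Theta$ forces $\bj$ to be injective; hence $\bj(\Om)$ is a compact, and so closed, subset of $\bbG$, and $\bj$ is a homeomorphism onto it. Let $\pi=\prod_{i\in\Ind}\pi_i$, where $\pi_i$ is the composition of the coordinate projection $\bbG\to\bbU_{n(i)}$ with the defining representation $\bbU_{n(i)}\actson\Cplx^{n(i)}$; by Remark~\ref{r:product-of-WOT-cts} this $\pi$ is WOT-continuous, and a computation of its commutant (using that each $\bbU_n$ linearly spans $\Mat_n$) shows $\VN_\pi(\bbG)=\bprod_{i\in\Ind}\Mat_{n(i)}$. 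Since $\bbG$ is compact, Peter--Weyl theory supplies a normal surjective $*$-homomorphism $q\colon\VN(\bbG)\to\bprod_{i\in\Ind}\Mat_{n(i)}$ with $q(\lambda_x)=(x_i)_{i\in\Ind}$. The adjoint of the restriction map $\FA(\bbG)\to C(\bj(\Om))\iso C(\Om)$ --- which is completely contractive, being the adjoint of the completely contractive restriction map --- is, as a map $M(\Om)\to\VN(\bbG)$, given by $\mu\mapsto\int_\Om\lambda_{\bj(\om)}\,d\mu(\om)$; and since $q$ is completely contractive and normal it carries this adjoint onto $\Theta$. Hence that adjoint is itself a complete isometry, so the restriction map $\FA(\bbG)\to C(\bj(\Om))$ is a complete quotient map, which by Remark~\ref{r:misc remarks}(i) says exactly that $\cbhel(\bj(\Om))=1$.

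For the construction, begin by ignoring the unitarity requirement. Regard $M(\Om)=C(\Om)^*$ with its dual operator space structure; since $C(\Om)$ is a minimal operator space, this is $\MAX M(\Om)$. By the construction of the operator space structure on a dual space (see \cite{ER_OSbook,BleMer_book}), the matrix norms of $M(\Om)$ are recovered by testing against all weak-$*$-continuous complete contractions into matrix algebras, and these are exactly the maps $\mu\mapsto\int_\Om F\,d\mu$ with $k\in\Nat$ and $F$ in the unit ball of $C(\Om,\Mat_k)$. Parametrising these by an index set $\Ind$ --- so that $i\in\Ind$ carries a size $k(i)$ and a function $F_i\in\ball\bigl(C(\Om,\Mat_{k(i)})\bigr)$ --- the map
\[
\Theta_0\colon M(\Om)\longrightarrow\bprod_{i\in\Ind}\Mat_{k(i)},\qquad
\Theta_0(\mu)=\Bigl(\int_\Om F_i\,d\mu\Bigr)_{i\in\Ind},
\]
is a complete isometry, and it is weak-$*$-continuous, being so in each coordinate. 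Writing $T_i(\om)\defeq\Theta_0(\delta_\om)_i=F_i(\om)$, we obtain for each $i$ a norm-continuous map $\om\mapsto T_i(\om)$ from $\Om$ into the closed unit ball of $\Mat_{k(i)}$, with $\Theta_0(\mu)=\int_\Om\bigl(T_i(\om)\bigr)_{i\in\Ind}\,d\mu(\om)$.

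To turn the contractions $T_i(\om)$ into unitaries, set $n(i)\defeq 2k(i)$ and let
\[
U_i(\om)\defeq\twomat{T_i(\om)}{\bigl(1-T_i(\om)T_i(\om)^*\bigr)^{1/2}}{\bigl(1-T_i(\om)^*T_i(\om)\bigr)^{1/2}}{-T_i(\om)^*}\in\bbU_{n(i)}
\]
be the $2\times2$ unitary dilation of $T_i(\om)$; it depends norm-continuously on $T_i(\om)$, the square roots being continuous functions of a positive matrix, and hence continuously on $\om$. Put $\bbG\defeq\prod_{i\in\Ind}\bbU_{n(i)}$ (a compact, connected group, being a product of such) and $\bj(\om)\defeq\bigl(U_i(\om)\bigr)_{i\in\Ind}\in\bbG$, so that $\bj$ is continuous for the product topology. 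The associated map $\Theta(\mu)=\int_\Om\bj(\om)\,d\mu$ into $\bprod_{i\in\Ind}\Mat_{n(i)}$ is completely contractive: each coordinate $\mu\mapsto\int_\Om U_i\,d\mu$ has norm at most $\abs{\mu}(\Om)$ because $\norm{U_i(\om)}=1$, and any contraction out of $\MAX M(\Om)$ into a matrix algebra is automatically completely contractive. On the other hand, compressing each summand $\Mat_{n(i)}$ onto its upper-left $k(i)\times k(i)$ corner defines a complete contraction $c\colon\bprod_{i\in\Ind}\Mat_{n(i)}\to\bprod_{i\in\Ind}\Mat_{k(i)}$ with $c\circ\Theta=\Theta_0$. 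Since $\Theta_0$ is a complete isometry while $c$ and $\Theta$ are completely contractive, $\Theta$ must be a complete isometry, and the reduction of the first paragraph then gives $\cbhel(\bj(\Om))=1$. This proves Theorem~\ref{t:embedding theorem}.

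The one substantive ingredient is that the embedding of the dual operator space $M(\Om)$ into a product of matrix algebras can be chosen weak-$*$-continuous: this is exactly what lets it be written as the integrated form $\mu\mapsto\int_\Om\bigl(T_i(\om)\bigr)_i\,d\mu$ of a genuine $\om$-indexed path of matrix contractions, which after the $2\times2$ unitary dilation becomes the required $\bbG$-valued embedding $\bj$. All the remaining steps --- the passage through $\VN(\bbG)$, the unitary dilation, and the compression argument --- are routine.
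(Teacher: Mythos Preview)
Your proof is correct and takes essentially the same approach as the paper, which splits the argument into Proposition~\ref{p:VN_pi not VN_lm} (your first paragraph) and Proposition~\ref{p:an embedding of M(Omega)} (your construction). The only notable difference is where the $2\times2$ unitary dilation enters: the paper indexes directly over unitary-valued functions $\bu\in C(\Om;\bbU_n)$ and proves a separate lemma (Lemma~\ref{l:dual norm using unitaries}) showing that testing over such unitaries already computes the standard dual norm, whereas you first index over all contractions (the canonical standard-dual embedding) and then dilate each one, using the identification $M(\Om)=\MAX M(\Om)$ explicitly to verify complete contractivity of the dilated map --- the same trick, deployed at a slightly different point.
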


The proof of the theorem will be broken down into two propositions. Before giving all the details, let us provide some motivation.
We saw in the previous section that, in order to find large finite sets with small cb-Helson constant, we need to find maps $\MAX\ell_1^n \to \VN(G)$ which are not too far from being embeddings of operator spaces, and which send the standard basis vectors of $\ell_1^n$ to elements of $\lambda(G) \subset \ball(\VN(G))$.
Well, $\MAX\ell_1$ has a completely isometric embedding into $\Cst({\mathbb F}_\infty)$, which embeds as a $\Cst$-subalgebra of $\bprod_{n\geq 1} M_{2n}(\Cplx)$, which in turn embeds as a $\Cst$-sublagebra of $\VN(\SUTWO)$.
Although this embedding does not seem to send the standard basis of $\ell_1$ to elements of $\lm(\SUTWO)$, it does embed the standard basis of $\ell_1$ into $\prod_{n\geq } \bbU_{2n}$, and the latter is a compact group when given the product topology.
Since we want an embedding of $\Omega$ into $\lambda(\bbG)\subset\ball(\VN(\bbG))$ for some compact $\bbG$, the natural {\it Ansatz} is to take $\bbG=\prod_{n\geq 1} \bbU_{2n}$ and hope that the details work out.

Now we return to the task of proving Theorem~\ref{t:embedding theorem}. As we have already seen, a natural way to get complete quotient maps onto $C(\Om)$  is to dualize and look for completely isometric, \wswscts\ embeddings of $C(\Om)^*=M(\Om)$.

\begin{rem}
We already used, in a small way, the standard result that if $\sV$ is a minimal operator space then the dual operator space structure on $\sV^*$ coincides with the maximal operator space structure. However, for the proofs that follow, we don't actually need to know this.
\end{rem}

The first step is to observe that, in some sense, it suffices to get a good embedding of $M(\Om)$ into $\VN_\pi(\bbG)$ for a suitable WOT-continuous unitary representation $\pi:\bbG\to \cH_\pi$.

\begin{prop}\label{p:VN_pi not VN_lm}
Let $G$ be a compact group. Suppose there exist: a \emph{faithful}, WOT-continuous, unitary representation $\pi:G\to \cU(\cH_\pi)$;
 and a complete isometry
$J:M(\Om) \to \Bdd(\cH_\pi)$ which is \wswscts\ and maps $\{\delta_\om\colon\om\in\Om\}$ to a subset of $\pi(G)$.

For each $\om\in\Om$ let $\bj(\om)$ be the unique element of $G$ such that $J(\delta_\om)=\pi(\bj(\om))$.
Then:
\begin{YCnum}
\item $\bj:\Om\to G$ is a continuous injection with closed range;
\item\label{li:J_* is restriction}
 $J_*(h)(\om) = h(\bj(\om))$ for all $\om\in \Omega$;
\item $\cbhel(\bj(\Om))=1$.
\end{YCnum}
\end{prop}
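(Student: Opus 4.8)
The plan is to prove the three conclusions essentially in order, after one preliminary reduction. Since any functional in $C(\Om)$ that annihilates every point mass must vanish, the set $\{\delta_\om\st\om\in\Om\}$ has weak-star dense linear span in $M(\Om)=C(\Om)^*$; as $J$ is \wswscts, this forces $J$ to map $M(\Om)$ into $\VN_\pi(G)$, the weak-star closed linear span of $\pi(G)$. So throughout we regard $J$ as a \wswscts\ complete isometry $M(\Om)\to\VN_\pi(G)$, noting that $\bj(\om)$ is well defined precisely because $\pi$ is faithful.

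For (i): $\bj$ is injective because both $J$ and $\pi$ are. For continuity, observe that $\om\mapsto\delta_\om$ is continuous from $\Om$ into $(M(\Om),\wstar)$ and $J$ is \wstar-\wstar continuous, so $\om\mapsto\pi(\bj(\om))=J(\delta_\om)$ is continuous from $\Om$ into $(\Bdd(\cH_\pi),\wstar)$; since it takes values in $\ball(\Bdd(\cH_\pi))$, where the weak-star topology and the WOT coincide, it is WOT-continuous. Because $\pi\colon G\to(\Bdd(\cH_\pi),{\rm WOT})$ is a continuous injection of a compact space into a Hausdorff space, it is a homeomorphism onto its range; composing the previous map with $\pi^{-1}$ shows $\bj$ is continuous, and then $\bj(\Om)$ is compact, hence closed in $G$. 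For (ii), \wstar-\wstar continuity makes $J$ the adjoint of some $J_*\colon\FA_\pi(G)\to C(\Om)$, so $J_*(h)(\om)=\pair{\delta_\om}{J_*(h)}=\pair{J(\delta_\om)}{h}=\pair{\pi(\bj(\om))}{h}$ for $h\in\FA_\pi(G)$; when $h=\xi*_\pi\eta$ this equals $\ip{\pi(\bj(\om))\xi}{\eta}=h(\bj(\om))$, and the general case follows since such $h$ are dense in $\FA_\pi(G)$ and both sides depend boundedly on $h$.

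Part (iii) is the substantive one, and the idea is to push the complete isometry $J$ — which only involves $\pi$ — over to the regular representation, using compactness of $G$. By the Peter--Weyl theorem, $\VN(G)\iso\bprod_{\sigma\in\widehat G}\Bdd(\cH_\sigma)$ with $\lambda_g\leftrightarrow(\sigma(g))_\sigma$, and $\VN_\pi(G)\iso\bprod_{\sigma\in S}\Bdd(\cH_\sigma)$ with $\pi(g)\leftrightarrow(\sigma(g))_{\sigma\in S}$, where $S\subseteq\widehat G$ indexes the irreducibles occurring in $\pi$; hence the coordinate projection $q\colon\VN(G)\to\VN_\pi(G)$ is a \wswscts\ surjective unital $*$-homomorphism with $q(\lambda_g)=\pi(g)$, in particular a complete quotient map of operator spaces. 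Now put $E=\bj(\Om)$, let $\rho\colon\FA(G)\to C(E)$ be the (completely contractive) restriction map, and let $\widetilde J=\rho^*\colon M(E)\to\VN(G)$ be its adjoint; then $\widetilde J$ is \wswscts, completely contractive, and $\widetilde J(\delta_e)=\lambda_e$. The homeomorphism $\bj$ induces a complete isometry $M(\Om)\iso M(E)$, which carries $J$ to a \wswscts\ complete isometry $\widehat J\colon M(E)\to\VN_\pi(G)$ with $\widehat J(\delta_e)=\pi(e)$. Since $q(\widetilde J(\delta_e))=q(\lambda_e)=\pi(e)=\widehat J(\delta_e)$ and the point masses span a weak-star dense subspace, \wstar-continuity gives $q\circ\widetilde J=\widehat J$. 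As $q$ and $\widetilde J$ are completely contractive while $q\circ\widetilde J=\widehat J$ is a complete isometry, $\widetilde J$ must itself be a complete isometry; by the standard duality between complete isometries and complete quotient maps, its pre-adjoint $\rho$ is a complete quotient map, i.e.\ $\cbhel(E)=1$ by the first part of Remark~\ref{r:misc remarks}.

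The one step I expect to be genuinely non-formal is the construction of $q$ and the recognition that it is exactly the device allowing one to trade $\VN_\pi(G)$ for $\VN(G)$ (and thereby reach $\FA(G)$); with that in hand, parts (i)--(iii) reduce to a routine mixture of weak-star topology, Peter--Weyl, and operator-space duality.
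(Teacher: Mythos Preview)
Your proof is correct and follows essentially the same route as the paper's. The only difference is that for part~(iii) the paper works on the predual side, invoking the completely isometric inclusion $\imath\colon\FA_\pi(G)\hookrightarrow\FA(G)$ (valid for compact $G$) together with the identity $R\circ\imath=(\bj^*)^{-1}\circ J_*$, whereas you argue on the dual side via the adjoint projection $q=\imath^*\colon\VN(G)\to\VN_\pi(G)$ (which you construct explicitly with Peter--Weyl); these are the same idea in dual guise.
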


\begin{proof}
Since $J$ is injective, so is $\bj$. Next we show that $\bj$ is continuous, which will imply that $\bj(\Om)$ is compact and hence closed in $G$.
Let $B$ denote $\ball(\Bdd(\cH_\pi))$ equipped with the relative WOT. Since $\pi:G\to B$ is a continuous map from a compact space to a Hausdorff one, it is a homeomorphism onto its range. Therefore to show $\bj:\Om\to G$ is continuous it suffices to show $\pi\circ\bj:\Om\to B$ is continuous. Let $\ev$ denote the map $\om\mapsto\delta_\om$, so that $\pi\circ\bj=J\circ\ev$. It is straightforward to check that when $M(\Om)$ is given the \wstar-topology, $\ev:\Om\to M(\Om)$ is continuous (just look at preimages of sub-basic open sets). Since $J:M(\Om)\to \Bdd(\cH_\pi)$ is \wswscts\ and contractive, it follows that $J\circ\ev:\Om \to (\ball(\cH_\pi), \wstar)$ is continuous. But the relative \wstar-topology and the relative WOT coincide on $\ball(\cH_\pi)$, so $J\circ\ev:\Om\to B$ is continuous as required.
 This completes the proof of part~(i).

Part (ii) is a straightforward consequence of the definitions of $\FA_\pi(G)$ and $\VN_\pi(G)$ and the pairing between them. In more detail:
since
 $\pair{\pi(g)}{\xi*_\pi\eta} = (\xi*_\pi\eta)(g)$ for each $g\in G$,
$\pair{\pi(g)}{h}_{\VN_\pi-\FA_\pi} = h(g)$
for all $g\in G$ and all $h\in \FA_\pi(G)$.
Hence, by the definition of $\bj$,
\[ h(\bj(\om)) = \pair{J(\delta_\om)}{h}_{\VN_\pi-\FA_\pi} = \pair{\delta_\om}{J_*(h)}_{M(\Om)-C(\Om)} = J_*(h)(\om), \]
as required. Thus \ref{li:J_* is restriction} is proved.

Finally: let $R:\FA(G)\to C(\bj(\Om))$ be the completely contractive map given by restriction of functions, and let $\bj^*$ denote the completely isometric isomorphism $C(\bj(\Om))\to C(\Om)$ induced by~$\bj$.
Then by part~\ref{li:J_* is restriction},
\[ \bj^*R(h)(\om) = h(\bj(\om)) = J_*(h)(\om) \qquad(h\in\FA(G),\om\in\Om). \]
Let $\imath:\FA_\pi(G)\to \FA(G)$ be the natural inclusion map; this is well-defined and a complete isometry since $G$ is compact. Then $R\circ\imath=(\bj^*)^{-1}\circ J_*$, and $J_*:\FA_\pi(G)\to C(\Om)$ is a complete quotient map (since its adjoint $J$ is a complete isometry); a quick check now shows that $R$ is also a complete quotient map, as required. This finishes the proof of part (iii).
\end{proof}

\begin{prop}[An embedding in a product of matrix algebras]\label{p:an embedding of M(Omega)}
Let $\Om$ be a compact Hausdorff space. Then there exists a family $(\cH_i)_{i\in\Ind}$ of finite-dimensional Hilbert spaces, such that when we take $\cM\defeq\bprod_i \Bdd(\cH_i)$, there is a completely isometric, \wswscts\ embedding $J:(\min C(\Om))^* \to \cM$ such that $J(\delta_\om)\in \prod_i \cU(\cH_i)$ for all $\om\in\Om$.
\end{prop}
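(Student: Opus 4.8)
The starting point is the standard fact that the dual of a minimal operator space is maximal: $(\min C(\Om))^* = \MAX M(\Om)$. So the task reduces to producing a $\wstar$-$\wstar$ continuous complete isometry of $\MAX M(\Om)$ into $\cM = \bprod_i \Bdd(\cH_i)$ with each $\Bdd(\cH_i)$ finite-dimensional, and with point masses landing among the unitaries. Recall the universal property of $\MAX$: for any Banach space $X$, the identity map $\MAX X \to X$ is isometric, and \emph{every} bounded linear map out of $\MAX X$ into an operator space is automatically completely bounded with the same cb-norm as its ordinary norm. Dually, a bounded linear map $T : X \to W$ into an operator space $W$ induces a complete contraction $\MAX X \to W$ iff $\norm{T}\leq 1$; and $T$ will be a \emph{complete isometry} on $\MAX X$ precisely when a suitable collection of matrix-valued "test functionals" on $X$ separates points with the right norms. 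Concretely, a family of contractions $(T_\lambda : X \to \Bdd(\cH_\lambda))_{\lambda}$ with $\sup_\lambda \norm{T_\lambda(x)} = \norm{x}$ for all $x$ assembles into a complete isometry $\MAX X \to \bprod_\lambda \Bdd(\cH_\lambda)$, because on the $\MAX$ side the matrix norm of $x = (x_{jk})$ is the supremum over all contractions $S : X \to \Bdd(\cK)$ of $\norm{[S(x_{jk})]}$, and contractions with \emph{finite-dimensional} range are enough (they are $\wstar$-dense, by a routine compactness/approximation argument, in the unit ball of completely bounded maps into $\Bdd(\cK)$ that we need to test against).

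So the plan is: (1) enumerate, up to unitary equivalence, "enough" contractions $T : M(\Om) \to \Mat_{m}$ for $m$ ranging over $\Nat$ — equivalently, $m\times m$ matrices of regular Borel measures $(\mu_{jk})$ with $\norm{[\mu_{jk}]}_{\Mat_m(M(\Om))}\leq 1$ — indexing them by a set $\Ind$, with $\cH_i = \Cplx^{m(i)}$; (2) check that the resulting block map $J : \MAX M(\Om)\to\cM$, $J(\mu) = (T_i(\mu))_i$, is a complete isometry, using the $\MAX$ characterization just described together with density of finite-dimensional-range contractions; (3) verify $\wstar$-$\wstar$ continuity of $J$: each coordinate $T_i : M(\Om)\to\Mat_{m(i)}$ is $\wstar$-$\wstar$ continuous since it is given by integration against fixed bounded continuous ($=$ $\Mat_{m(i)}$-valued $C(\Om)$) functions, hence is the adjoint of a map $\Mat_{m(i)}^* \to C(\Om)$; a bounded net that converges $\wstar$ in $M(\Om)$ therefore converges coordinatewise, which is $\wstar$-convergence in $\cM = \bigl(\bigoplus_i \Bdd(\cH_i)_*\bigr)^*$ on bounded sets; (4) finally, observe $\delta_\om$ has norm $1$ and, for each $i$, $T_i(\delta_\om)$ is a contraction in $\Mat_{m(i)}$ — that alone is not unitary, so I must be slightly more careful here: I should arrange the family $(T_i)$ so that it is closed under the operation of dilating a contractive $T$ to a \emph{unital completely positive} (hence $\ast$-preserving) map, or simply restrict to those $T_i$ that are unital $\ast$-homomorphisms $C(\Om)\to\Mat_{m(i)}$ composed with the canonical map $M(\Om) = C(\Om)^*$... no — cleaner: include among the $T_i$ all maps of the form $\mu \mapsto \bigl(\int f_{jk}\,d\mu\bigr)_{jk}$ where $[f_{jk}]\in\ball(\Mat_m(C(\Om)))$ is \emph{unitary-valued} pointwise, i.e.\ $[f_{jk}(\om)]\in\bbU_m$ for every $\om$; such maps send $\delta_\om$ to the unitary $[f_{jk}(\om)]$, and one shows these already suffice to compute the $\MAX$-matrix-norm on $M(\Om)$ because a contraction in $\Mat_m(C(\Om))$ can be written as an average/limit of unitary-valued ones (matrix-valued analogue of the scalar fact that $\ball(C(\Om))$ is the closed convex hull of the unitaries $C(\Om,\T)$, via the Russo–Dye theorem applied in the $C^*$-algebra $\Mat_m(C(\Om)) = C(\Om,\Mat_m)$).

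The main obstacle is exactly this last point: ensuring simultaneously that the test family is rich enough to witness the \emph{complete} isometry and that every member sends point masses to honest unitaries. The resolution is the Russo–Dye theorem in the unital $C^*$-algebra $C(\Om,\Mat_m)$: its closed unit ball is the norm-closure of the convex hull of its unitary group $C(\Om,\bbU_m)$, so any $[f_{jk}]\in\ball(\Mat_m(C(\Om)))$ is a limit of convex combinations of unitary-valued matrices; passing from a single contraction to finitely many unitary-valued ones (and taking the associated block map) loses nothing in the supremum defining the $\MAX$ norm, since $\norm{\cdot}$ on $\Mat_k(\MAX M(\Om))$ is a sup over such data and the sup is unchanged under replacing each datum by nearby unitary-valued data and by enlarging the family. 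Everything else — $\wstar$-$\wstar$ continuity, the passage $(\min C(\Om))^* = \MAX M(\Om)$, and the identification $\cM_* = \bigoplus_i \Bdd(\cH_i)_*$ — is routine operator-space bookkeeping. With Proposition~\ref{p:an embedding of M(Omega)} in hand, one feeds $\cM$ into the regular representation of the compact group $\bbG = \prod_i \bbU_{n(i)}$ (with $n(i) = \dim\cH_i$) so that $\cM$ sits inside $\VN(\bbG)$ with point masses inside $\lambda(\bbG)$, and Proposition~\ref{p:VN_pi not VN_lm} converts this into the cb-Helson conclusion of Theorem~\ref{t:embedding theorem}.
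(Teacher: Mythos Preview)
Your proposal is correct and follows essentially the same route as the paper: index the coordinates of $\cM$ by the unitary-valued matrix functions $\bu\in C(\Om,\bbU_n)$ for all $n$, define each coordinate of $J$ by $\mu\mapsto\bigl[\int u_{jk}\,d\mu\bigr]$, invoke Russo--Dye (the paper instead writes out the explicit $2\times 2$ unitary dilation of a contraction, but names Russo--Dye as an alternative) to see that unitaries in $M_n(C(\Om))$ suffice to compute the standard-dual/$\MAX$ matrix norms on $M(\Om)$, and check \wstar-\wstar\ continuity coordinatewise. One small slip: in your step~(1) you parametrize the coordinate maps by matrices of \emph{measures} $[\mu_{jk}]\in M_m(M(\Om))$, whereas the \wstar-continuous maps $M(\Om)\to\Mat_m$ correspond to matrices $[f_{jk}]\in M_m(C(\Om))$ --- but you correct this yourself in step~(4).
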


The proof of this proposition consists of modifying the construction in~\cite{Ble_sdual} of the ``standard dual of an operator space'', in the special case of $C(\Om)$. Since the details are straightforward but somewhat lengthy, the proof of this proposition will be deferred to the next section.

\begin{proof}[Proof of Theorem~\ref{t:embedding theorem}]
Let $(\cH_i)_{i\in\Ind}$, $\cM$ and $J$ be as provided by Proposition~\ref{p:an embedding of M(Omega)}.
Take $\bbG=\cU(\cM)=\prod_i \cU(\cH_i)$ and define $\bj:\Om\to\bbG$ by $J(\delta_\om)=\pi(\bj(\om))$.
Equip each $\cU(\cH_i)$ with the relative WOT, and equip $\bbG$ with the product topology. Since each $\cH_i$ is finite-dimensional, each $\cU(\cH_i)$ is compact, so $\bbG$ is compact.

Let $\cH=\hbox{$\ell^2$-}\bigoplus_{i\in\Ind}\cH_i$, and let $\theta: \cM\to\Bdd(\cH)$ be the natural, \wswscts\ inclusion. By abuse of notation we also use $\theta$ to denote the corresponding representation $\bbG\to \cU(\cH)$. We claim that: (i)~$\theta:\bbG\actson\cH$ is WOT-continuous; (ii)~$\VN_\theta(\bbG)=\theta(\cM)$. If~both of these hold, then the hypotheses of Proposition~\ref{p:VN_pi not VN_lm}
 are satisfied, and hence $\bj(\Om)$ is a cb-Helson subset of $\bbG$ with cb-Helson constant $1$.

To prove (i), for each $i\in\Ind$ let $\theta_i: \bbG \to \cU(\cH_i)$ be the cooordinate projection: this is  WOT-continuous by definition of the product topology. Hence, by Remark~\ref{r:product-of-WOT-cts}, the direct product $\prod_i\theta_i :\bbG \actson \cH$ is also WOT-continuous. It is easily checked that $\prod_i\theta_i$ coincides with~$\theta$, so we have proved~(i).

To prove (ii): note that the inclusion $\VN_\theta(\bbG)\subseteq\theta(\cM)$ is trivial. On the other hand, since each element of a unital $\Cst$-algebra $A$ is a linear combination of four elements of $\cU(A)$, we have
$\theta(\cM)\subseteq \operatorname{lin} \theta(\cU(\cM)) = \operatorname{lin}(\theta(\bbG)) \subseteq \VN_\theta(\bbG)$,
giving the converse inclusion.
(As pointed out by the referee: one could instead appeal to more general results on coefficient spaces of products of representations: see~e.g. \cite[Corollaire~3.13]{Arsac_Lyon76}.) 
 This completes the proof of~(ii), and hence completes the proof of the theorem.
\end{proof}

Theorem~\ref{t:embedding theorem} is a cb-analogue of a folklore result in the classical theory of Helson sets, which says that any compact space arises as a Helson subset of a product of (many!) copies of~$\T$. To emphasise the analogy, we state a more long-winded version of the classical result.

\begin{prop}[probably folklore]\label{p:univ-Helson}
Let $\Omega$ be a compact Hausdorff space. Let $\Gm=\bU(C(\Om))_d$, i.e.~the unitary group of $C(\Om)$ equipped with the \emph{discrete} topology, and let $G$ be the Pontryagin dual of $\Gm$, regarded as a subset of $\ball(\ell^\infty(\Gm))$.
Define $J:M(\Omega) \to \ell^\infty(\Gm)$
by $J(\mu)(\gm) = \int_\Om \gm\,d\mu$, and let $j(\om)=J(\delta_\om)$. Then
\begin{YCnum}
\item $j(\Om)$ is a compact subset of $G$;
\item $J:M(\Omega)\to \ell^\infty(\Gm)$ is \wswscts\ and bounded below;
\item if we identify $\FA(G)$ with $\ell^1(\Gm)$, then the restriction map $\FA(G)\to C(j(\Omega))$ is just $J_*$\/.
\end{YCnum}
In particular $j(\Omega)$ is a Helson subset of $G$.
\end{prop}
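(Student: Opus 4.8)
The plan is to run, in the present abelian and entirely concrete situation, the same duality argument that powered Proposition~\ref{p:VN_pi not VN_lm}. Write $\Gamma=\bU(C(\Om))$ for the unitary group of $C(\Om)$; since $C(\Om)$ is commutative, $\Gamma$ is an abelian group, and equipping it with the discrete topology makes $G=\widehat{\Gamma}$ a compact abelian group with $\widehat{G}=\Gamma$ by Pontryagin duality. Viewing a character $\chi\in\widehat{\Gamma}$ as the bounded function $\gamma\mapsto\chi(\gamma)$ realizes $G$ as a weak*-closed, hence weak*-compact, subset of $\ball(\ell^\infty(\Gamma))$, on which the relative weak* topology coincides with the dual group topology.

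The central object is the contraction $S\colon\ell^1(\Gamma)\to C(\Om)$ given by $S(a)=\sum_{\gamma\in\Gamma}a(\gamma)\gamma$; this converges in $C(\Om)$ because $\norm{\gamma}_\infty=1$. First I would check that $J=S^*$: for $\mu\in M(\Om)$ and $a\in\ell^1(\Gamma)$,
\[ \pair{S^*\mu}{a}=\pair{\mu}{Sa}=\sum_{\gamma}a(\gamma)\int_\Om\gamma\,d\mu=\pair{J\mu}{a}. \]
This already gives the ``\wswscts'' half of (ii), since the adjoint of a bounded operator is automatically \wstar-\wstar\ continuous. For ``bounded below'' the one genuinely substantive point is that $S$ is \emph{surjective}: every element of a unital $\Cst$-algebra is a linear combination of four of its unitaries, so $C(\Om)=\operatorname{lin}\Gamma=\operatorname{range}(S)$. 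By the open mapping theorem $S$ is open, whence $\norm{J\mu}_\infty=\sup_{\norm{a}_1\le1}\abs{\pair{\mu}{Sa}}\ge c\norm{\mu}$ for a suitable $c>0$ (in fact, by the Russo--Dye theorem $\ball(C(\Om))=\clos{\operatorname{conv}}(\Gamma)$, so $J$ is even isometric).

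For (i): $j(\om)(\gamma)=\int_\Om\gamma\,d\delta_\om=\gamma(\om)$, so $j(\om)$ is the evaluation character $\ev_\om$, which lies in $\widehat{\Gamma}=G$; continuity of $j\colon\Om\to G$ is exactly continuity of $\om\mapsto\pair{j(\om)}{a}=(Sa)(\om)$ for each $a\in\ell^1(\Gamma)$, which holds as $Sa\in C(\Om)$, so $j(\Om)$ is compact (and $j$ is injective, since $\Gamma$ spans $C(\Om)$ and hence separates points of the compact Hausdorff space $\Om$, so $j$ is a homeomorphism onto its range). For (iii), identify $\FA(G)$ with $\ell^1(\Gamma)$ via the Fourier transform $a\mapsto F_a$, $F_a(\chi)=\sum_\gamma a(\gamma)\chi(\gamma)$, using $\widehat{G}=\Gamma$; then $F_a(j(\om))=\sum_\gamma a(\gamma)\gamma(\om)=(Sa)(\om)$, so the restriction map $\FA(G)\to C(j(\Om))$, followed by the isometric isomorphism $C(j(\Om))\to C(\Om)$ induced by $j$, is precisely $S=J_*$. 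Finally, since the restriction map is thereby identified with the surjection $S$, it is itself onto (an isomorphism of Banach spaces by the open mapping theorem, or directly because $J$ is bounded below), so $j(\Om)$ is a Helson subset of $G$.

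I do not expect any serious obstacle here, as this is folklore; the one step that needs a little care is the bookkeeping in (iii), where one must fix consistent conventions for the Fourier transform on the compact abelian group $G$ and for the pairing $\Gamma\times G\to\T$ so that the restriction map comes out as $J_*$ on the nose. A different convention merely replaces each $\gamma$ by $\overline{\gamma}$ throughout, which relabels $\Gamma$ and changes nothing.
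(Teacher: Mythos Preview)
Your proposal is correct and matches what the paper has in mind. In fact the paper omits the proof entirely, calling it ``straightforward book-keeping'' and giving only the hint that any element of $\ball(C(\Om))$ is a combination of four unitaries (with \cite[Lemma 5.5.1]{Rudin_FAG} cited for the sharper isometric statement); this is exactly the mechanism you invoke for surjectivity of $S$ and hence for the ``bounded below'' part of~(ii), and your remaining verifications for (i) and (iii) are the book-keeping the paper alludes to.
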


We omit the proof, which is straightforward book-keeping (for part (ii) we use the fact that any element of $\ball(C(\Om))$ can be written as $u_1+u_2+i(v_1+v_2)$ for some $u_1,u_2,v_1,v_2\in \Gm$; if we use \cite[Lemma 5.5.1]{Rudin_FAG} one can actually show that $J$ is an isometry). I do not know an exact reference for the precise statement above, but the construction and the method have appeared independently many times in the literature. For instance, as pointed out in \cite[\S5]{Herz_helson}, the group $G$ can be seen as the ``free compact abelian group'' generated by $\Om$, in an appropriate category-theoretic sense.
\end{section}

\begin{section}{The proof of Proposition~\ref{p:an embedding of M(Omega)}}

We seek a \wswscts\ and completely isometric embedding $J:(\MIN C(\Om))^*\to\cM$,
where $\cM$ is a product of matrix algebras,
 such that \begin{equation}\label{eq:desired tweak}
J(\delta_\om)\in\bU(\cM) \qquad\text{for all $\om\in\Omega$.}
\end{equation}

Let us temporarily ignore the requirement \eqref{eq:desired tweak}. Then there is a standard way to embed $(\MIN C(\Om))^*$ {\wswscts}ly and completely isometrically into a product of matrix algebras: this is a by-product of the definition of the \dt{standard dual} of a given operator space (see~\cite{Ble_sdual}).

With minor modifications this embedding procedure can be made to also satisfy~\eqref{eq:desired tweak}. Here are the details. Given an operator space $\sV$ and $p\in\Nat$, the ``standard norm'' on $M_p(\sV^*)$ is defined as follows:
given $\bmu=[\mu_{st}]\in M_p(\sV)$, let $T_\bmu: \sV\to \Mat_k$ be the linear map $v\mapsto [\mu_{st}(v)]$, and define $\norm{\bmu}_{(\rm SD)}$ to be $\norm{T_\bmu}_{cb}$.
To get \eqref{eq:desired tweak}, we note that if $\sV$ is a unital $\Cst$-algebra equipped with its canonical operator space structure, the cb-norm of $T_\bmu$ can be determined by testing on unitary matrices with entries in~$\sV$.
Although this follows from the Russo--Dye theorem, we shall give a more hands-on approach.

\begin{lem}[Determining $\norm{T_\bmu}_{cb}$]\label{l:dual norm using unitaries}
Let $\sV$ be a unital $\Cst$-algebra and let $\bmu\in M_p(\sV^*)$. Then
\[
\norm{\bmu}_{(\rm SD)} = \norm{T_\bmu}_{cb} = \sup_{n\in\Nat}\ \sup \{ \norm{(T_\bmu)_n (\bu) }_{M_n(\Mat_p)} \st \bu \in \cU(M_n(\sV))  \}.
\]
\end{lem}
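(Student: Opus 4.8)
The plan is to show that the supremum over all $\bu\in\cU(M_n(\sV))$ for all $n$ computes $\norm{T_\bmu}_{cb}$; one inequality is trivial and the other is the content. First, observe that $\norm{(T_\bmu)_n\colon M_n(\sV)\to M_n(\Mat_p)}\geq\norm{(T_\bmu)_n(\bu)}$ for every $\bu\in\cU(M_n(\sV))\subseteq\ball(M_n(\sV))$, so the displayed supremum is at most $\sup_n\norm{(T_\bmu)_n}=\norm{T_\bmu}_{cb}$. The reverse inequality is where the work lies: I need to produce, for any $\bx\in\ball(M_n(\sV))$ and any $\varepsilon>0$, a unitary $\bu$ in some $M_m(\sV)$ (possibly with $m$ a small multiple of $n$) such that $\norm{(T_\bmu)_m(\bu)}$ is close to $\norm{(T_\bmu)_n(\bx)}$, or at least such that the unitaries witness the full cb-norm.

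The key step is a standard ``unitary dilation'' trick. Given a contraction $\bx\in M_n(\sV)$ (noting $M_n(\sV)$ is itself a unital $\Cst$-algebra), form the $2n\times 2n$ matrix
\[
\bu \defeq \twomat{\bx}{(1-\bx\bx^*)^{1/2}}{(1-\bx^*\bx)^{1/2}}{-\bx^*},
\]
which is a unitary in $M_2(M_n(\sV))\iso M_{2n}(\sV)$, and has $\bx$ as its top-left $n\times n$ corner. Applying $(T_\bmu)_{2n}$ and compressing to the appropriate corner recovers $(T_\bmu)_n(\bx)$ as a subblock of $(T_\bmu)_{2n}(\bu)$ in $M_{2n}(\Mat_p)$; since compression to a corner is completely contractive, $\norm{(T_\bmu)_n(\bx)}_{M_n(\Mat_p)}\leq\norm{(T_\bmu)_{2n}(\bu)}_{M_{2n}(\Mat_p)}$. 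Taking the supremum over contractions $\bx\in M_n(\sV)$ gives $\norm{(T_\bmu)_n}\leq$ the displayed supremum over unitaries, and then the supremum over $n$ gives $\norm{T_\bmu}_{cb}\leq$ the displayed supremum, completing the proof. (Alternatively, one invokes the Russo--Dye theorem, which says $\ball(M_n(\sV))=\clos{\operatorname{conv}}\,\cU(M_n(\sV))$, together with the fact that the $M_n(\Mat_p)$-norm of a linear image is a convex function, so its sup over the closed convex hull of the unitaries equals its sup over the unitaries themselves; but the dilation argument is self-contained and avoids Russo--Dye.)

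The main obstacle, such as it is, is purely bookkeeping: one must be careful that $\norm{\cdot}_{(\rm SD)}$ really is $\norm{T_\bmu}_{cb}$ by definition (so the first equality in the display is a tautology, not something to prove), and one must track the identification $M_2(M_n(\sV))\iso M_{2n}(\sV)$ and the matching identification $M_2(M_n(\Mat_p))\iso M_{2n}(\Mat_p)$ so that the corner-compression estimate is genuinely a complete contraction between the right matrix levels. No deep input is needed beyond the existence of the square roots $(1-\bx\bx^*)^{1/2}$ and $(1-\bx^*\bx)^{1/2}$ in the unital $\Cst$-algebra $M_n(\sV)$ and the elementary identity $\bx(1-\bx^*\bx)^{1/2}=(1-\bx\bx^*)^{1/2}\bx$, which makes $\bu$ unitary.
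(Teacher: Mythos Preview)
Your proposal is correct and follows essentially the same approach as the paper: both use the $2\times 2$ unitary dilation (Halmos/Julia matrix) of a contraction in $M_n(\sV)$, then compress to recover $(T_\bmu)_n(\bx)$ as a corner of $(T_\bmu)_{2n}(\bu)$. The only cosmetic difference is that the paper places the contraction in the $(1,2)$ block and compresses with $E_{12}$, whereas you put it in the $(1,1)$ block; both are valid, and the paper likewise remarks that Russo--Dye would give an alternative route.
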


\begin{proof}
We use the following well known trick: since $\sV$ is a unital $\Cst$-algebra, for each $a\in\ball(\sV)$ the block matrix
\[ U_a \defeq \twomat{(1-aa^*)^{1/2}}{a}{-a^*}{(1-a^*a)^{1/2}} \]
is well-defined and unitary in $M_2(\sV)$.

Let $n\in\Nat$ and let $E_{12}: M_{2n}(\sV) \to M_n(\sV)$ be compression to the (1,2) entry when we identify $M_{2n}(\sV)$ with $M_2(M_n(\sV))$.
By abuse of notation we also use $E_{12}$ to denote the corresponding compression map $M_{2n}(\Mat_p)\to M_n(\Mat_p)$.

Let $\bmu\in M_p(\sV^*)$. Since $E_{12}\circ (T_\bmu)_{2n} = (T_{\bmu})_n \circ E_{12}$, we have
\[ \begin{aligned}
\norm{(T_\bmu)_n}
 & = \sup\{ \norm{(T_\bmu)_n(\bv) }_{M_n(\Mat_p)} \st \bv\in \ball(M_n(\sV)) \} \\
 & = \sup\{ \norm{E_{12}(T_\bmu)_{2n} (U_{\bv}) }_{M_n(\Mat_p)} \st \bv\in \ball(M_n(\sV)) \} \\
 & \leq \sup\{ \norm{(T_\bmu)_{2n} (U_{\bv}) }_{M_{2n}(\Mat_p)} \st \bv\in \ball(M_n(\sV)) \} \\
 & \leq \sup\{ \norm{(T_\bmu)_{2n} (\bu) }_{M_{2n}(\Mat_p)} \st \bu \in \cU(M_{2n}(\sV))  \} \\
 & \leq \sup_{r\in\Nat} \sup\{ \norm{(T_\bmu)_r (\bu) }_{M_r(\Mat_p)} \st \bu \in \cU(M_r(\sV))  \}
 & \leq \norm{T_\bmu}_{cb}\,.
\end{aligned} \]
Taking the supremum over all $n$ on the left hand side, the result follows.
\end{proof}

Now we specialize to the case $\sV=C(\Om)$.
We can identify $M_n(C(\Om))$ (completely isometrically) with $C(\Om;\Mat_n)$, and under this identification $\cU(M_n(C(\Om)))$ is identified with $C(\Om; \bbU_n)$. To ease notation, we denote this unitary group by $\Gm_n$. Then, given $\bu =[u_{ij}] \in \Gm_n$, define $J_{n,\bu} : M(\Om) \to \Mat_n$ by
\begin{equation}
J_{n,\bu} (\mu) \defeq [ \mu(u_{ij}) ]
\end{equation}
If $\bmu = [\mu_{st}]\in M_p(M(\Om))$ then the ``canonical shuffle'' $M_p(\Mat_n) \iso M_n(\Mat_p)$ maps 
$(J_{n,\bu})_p (\bmu)$ to $(T_\bmu)_n(\bu)$, and so by Lemma~\ref{l:dual norm using unitaries} we get
\begin{equation}\label{eq:norm attained}
\sup_n \sup_{\bu\in \Gm_n} \norm{ (J_{n,\bu})_p(\bmu) }
= 
\sup_n \sup_{\bu\in \Gm_n} \norm{ (T_\bmu)_n(\bu) } = \norm{\bmu}_{(\rm SD)}\,.
\end{equation}

Let $\cM\defeq \bprod_{n\in \Nat} \bprod_{\bu\in \Gm_n} \Mat_n$
and define
$J : M(\Om) \longrightarrow \cM$
to be the direct product of the maps $J_{n,\bu}$. Equation~\eqref{eq:norm attained} implies that for each $p\in\Nat$ we have
\[ \norm{J_p(\bmu)} = \norm{\bmu}_{(\rm SD)} \quad\text{for all $\bmu\in M_p(M(\Om))$.} \]
so that $J:M(\Om)\to \cM$ is a complete isometry.
Note that for each $\om\in\Omega$,
$J_\bu(\delta_\om) = \bu(\om)\in \bbU_n$, hence $J(\delta_\om) \in \bU(\cM)$.
So to complete the proof of Proposition~\ref{p:an embedding of M(Omega)}, there is only one thing left to check.

\begin{lem}\label{l:J is normal}
$J$ is \wswscts.
\end{lem}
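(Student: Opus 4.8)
The plan is to show that $J$ is \wswscts\ by checking weak-star continuity coordinate-by-coordinate, then invoking a standard criterion. Concretely, $\cM = \bprod_{n}\bprod_{\bu\in\Gm_n}\Mat_n$ is a dual Banach space, whose predual is the $\ell^1$-direct sum $\bigoplus_{n}\bigoplus_{\bu\in\Gm_n}\Mat_n^*$ (each $\Mat_n$ being finite-dimensional, hence its own double dual). A bounded linear map into a dual space of this form is \wswscts\ precisely when each of its scalar coordinate functionals — here, $\mu\mapsto \mu(u_{ij})$ for fixed $n$, fixed $\bu=[u_{ij}]\in\Gm_n$, and fixed $i,j$ — is weak-star continuous on $M(\Om)=C(\Om)^*$. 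But each such functional is just evaluation at the fixed element $u_{ij}\in C(\Om)$, and evaluation at a fixed predual element is by definition weak-star continuous. First I would make this coordinate reduction precise; then the continuity of each coordinate is immediate.

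To turn this into a clean argument I would avoid fussing with the exact description of the predual of an infinite $\ell^\infty$-product and instead argue directly with the definition of \wswscts: a bounded map $T\colon X^*\to Y^*$ is \wswscts\ iff it has a (necessarily bounded) preadjoint $T_*\colon Y\to X$, equivalently iff $\langle T(\cdot), y\rangle$ is weak-star continuous on $X^*$ for every $y$ in a set whose linear span is norm-dense in $Y$. For $\cM$, the norm-dense span of coordinate functionals $e_{n,\bu}^{ij}\in\cM_*$ — the functional picking out the $(i,j)$ entry of the $(n,\bu)$ block — suffices, since these span a dense subspace of the $\ell^1$-sum $\cM_*$. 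Composing $J$ with $e_{n,\bu}^{ij}$ gives exactly $\mu\mapsto\mu(u_{ij})=\langle u_{ij},\mu\rangle_{C(\Om)-M(\Om)}$, which is weak-star continuous. Hence $J$ is \wswscts.

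I do not expect any genuine obstacle here: the content is purely bookkeeping about dual pairings, and the only mild subtlety is being careful that an $\ell^\infty$-product of reflexive (finite-dimensional) spaces has predual the corresponding $\ell^1$-sum, and that the coordinate functionals have dense span in that $\ell^1$-sum — both standard. If one wanted to be even more economical, one could observe that $J$ is the direct product of the finitely-many-dimensional-range maps $J_{n,\bu}$, each of which is \wswscts\ because it is $\mu\mapsto[\mu(u_{ij})]$ with all $u_{ij}\in C(\Om)$ fixed, and the direct product of a uniformly bounded family of \wswscts\ maps is again \wswscts\ (one checks this against coordinate functionals, as above). Either route gives the lemma in a few lines.

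\begin{proof}[Proof of Lemma~\ref{l:J is normal}]
Recall that $\cM=\bprod_{n\in\Nat}\bprod_{\bu\in\Gm_n}\Mat_n$. Since each $\Mat_n$ is finite-dimensional, it is reflexive, and $\cM$ is a dual Banach space with predual
\[
\cM_* = \hbox{$\ell^1$-}\bigoplus_{n\in\Nat}\ \hbox{$\ell^1$-}\bigoplus_{\bu\in\Gm_n} \Mat_n^*\,,
\]
the weak-star topology on $\cM$ being the topology $\sigma(\cM,\cM_*)$. For $n\in\Nat$, $\bu=[u_{st}]\in\Gm_n$ and $1\leq i,j\leq n$, let $e^{ij}_{n,\bu}\in\cM_*$ denote the functional sending an element of $\cM$ to the $(i,j)$-entry of its component in the $(n,\bu)$-summand. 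The linear span of $\{e^{ij}_{n,\bu} \st n\in\Nat,\ \bu\in\Gm_n,\ 1\leq i,j\leq n\}$ is norm-dense in $\cM_*$.

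Now a bounded linear map $T\colon C(\Om)^*\to\cM$ is \wswscts\ if and only if $\mu\mapsto \pair{T(\mu)}{\phi}$ is \wstar-continuous on $M(\Om)$ for every $\phi$ in a norm-dense subset of $\cM_*$; it therefore suffices to verify this for $\phi=e^{ij}_{n,\bu}$. By the definition of $J$ as the direct product of the maps $J_{n,\bu}$, and since $J_{n,\bu}(\mu)=[\mu(u_{st})]$, we have
\[
\pair{J(\mu)}{e^{ij}_{n,\bu}} = \mu(u_{ij}) = \pair{u_{ij}}{\mu}_{C(\Om)-M(\Om)}\,.
\]
As $u_{ij}$ is a fixed element of $C(\Om)$, the function $\mu\mapsto\pair{u_{ij}}{\mu}$ is \wstar-continuous on $M(\Om)$ by the very definition of the \wstar-topology. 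Hence $J$ is \wswscts.
\end{proof}
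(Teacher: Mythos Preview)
Your proof is correct and follows essentially the same approach as the paper's: identify $\cM_*$ as an $\ell^1$-direct sum and verify that $J^*$ maps $\cM_*$ into $C(\Om)=M(\Om)_*$. The only difference in execution is that the paper takes an arbitrary $\bS=(S_{n,\bu})\in\cM_*$ and explicitly constructs the corresponding $h\in C(\Om)$ as an absolutely convergent series (thereby exhibiting the preadjoint $J_*$ on all of $\cM_*$), whereas you use a density reduction to check only against the coordinate functionals $e^{ij}_{n,\bu}$, where the preadjoint is visibly $u_{ij}\in C(\Om)$.
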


The analogous statement for the ``canonical embedding'' of a dual operator space can be found in \cite[Proposition 2.1]{Ble_sdual}, and a description of the pre-adjoint is given without proof after \cite[Proposition 3.1]{Ble_sdual}.
For sake of completeness, we give the details.

\begin{proof}[Proof of Lemma~\ref{l:J is normal}]
Let ${\bf T}_n$ denote the space of trace class operators on $\Cplx^n$, equipped with its canonical norm. Then
\[ \cM_* = \hbox{$\ell_1$-}\bigoplus_n \bigoplus_{\bu\in C(\Om;\bbU_n)} {\bf T}_n\,,\]
and
for any $\bS =(S_{n,\bu})\in \cM_*$ and $\mu\in M(\Om)$ we have
\begin{equation}\label{eq:one} 
 \pair{J^*(\bS)}{\mu}_{M(\Om)^*-M(\Om)}
 = \sum_{n\in\Nat} \sum_{\bu\in \Gm_n} \Tr(S_{n,\bu}J_{\bu}(\mu) ) \,.
\end{equation}
Define $h\in C(\Om)$ by
\[ h(\om) = \sum_{n\in\Nat}\sum_{\bu\in \Gm_n} \Tr\left[ S_{n,\bu} \bu(\om) \right] \qquad(\om\in\Om). \]
The sum is absolutely convergent, uniformly in $\om$, since $\sum_{n\in\Nat}\sum_{\bu\in\Gm_n} \norm{ S_{n,\bu} }_1 < \infty$ by our assumption on $\bS$. 
Therefore we have
\begin{equation}\label{eq:two}
 \mu(h) = \sum_{n\in\Nat}\sum_{\bu\in\Gm_n} 
\mu\left( \Tr\left[ S_{n,\bu}\bu(\cdot)\right]   \right)
 = \sum_{n\in\Nat} \sum_{\bu\in\Gm_n)} \Tr(S_{n,\bu}J_{\bu}(\mu) ) \,,
\end{equation}
and combining \eqref{eq:one} and \eqref{eq:two} gives
 $\pair{J^*(\bS)}{\mu} = \mu(h)$. Thus $J^*(\cM_*)\subseteq C(\Om) = M(\Om)_*$ as required.
(In fact, our argument constructs the pre-adjoint $J_*$ explicitly, as $J_*(\bS)\defeq h$.)
\end{proof}

\end{section}

\appendix
\begin{section}{Products of WOT-continuous representations}
The arguments that follow are surely not new. We include them because they highlight that local compactness plays no role in Remark~\ref{r:product-of-WOT-cts}.

Throughout $\Ind$ is a fixed indexing set, not necessarily countable.
Let $(\cH_i)_{i\in\Ind}$ be a family of Hilbert spaces, and let
$\cM\defeq \bprod_i \Bdd(\cH_i)$.
Of course, $\ball(\cM)=\prod_i \ball(\Bdd(\cH_i))$ as sets.
Form the $\ell^2$-direct sum $\cH=\ell^2\hbox{-}\bigoplus_{i\in\Ind} \cH_i$. If we regard $\cM$ as a von Neumann subalgebra of $\Bdd(\cH)$ in the natural way, via block-diagonal embedding, then we may equip $\ball(\cM)$ with the relative WOT inherited from $\Bdd(\cH)$, which will be denoted by~$\tau$. On the other hand, if we let $\tau_i$ be the relative WOT on $\ball(\Bdd(\cH_i)$ for each $i\in\Ind$, this gives us a product topology on $\prod_{i\in\Ind}\ball(\cH_i) = \ball(\cM)$.

\begin{lem}[Products of unit balls in the WOT]
\label{l:product of relative WOT}
The identity map\hfill\break $\id:(\ball(\cM),\tau)\to \prod_{i\in\Ind} (\ball(\Bdd(\cH_i)), \tau_i)$ is a homeomorphism.
\end{lem}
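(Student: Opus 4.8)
The statement is that on $\ball(\cM)$ the single WOT coming from the $\ell^2$-direct sum $\cH$ agrees with the product of the individual WOTs on the $\ball(\Bdd(\cH_i))$. Both topologies are on the same underlying set, so I just need to show the identity map is continuous in each direction; since $\ball(\cM)$ is not assumed compact I cannot shortcut via ``continuous bijection from compact to Hausdorff''. The plan is to compare sub-basic neighbourhoods directly, using the explicit description of WOT by rank-one functionals $T\mapsto \ip{T\xi}{\eta}$.

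\textbf{Step 1: $\id$ is continuous from $\tau$ to the product topology.} A sub-basic open set for the product topology is determined by a single coordinate $i_0$ and a sub-basic WOT-open set there, i.e.\ by vectors $\xi_{i_0},\eta_{i_0}\in\cH_{i_0}$ and the functional $T\mapsto \ip{T\xi_{i_0}}{\eta_{i_0}}$ on $\ball(\Bdd(\cH_{i_0}))$. Regarding $\xi_{i_0},\eta_{i_0}$ as elements of $\cH$ supported in the $i_0$ coordinate, one checks that for $a=(a_i)_i\in\ball(\cM)$ the block-diagonal operator satisfies $\ip{a\xi_{i_0}}{\eta_{i_0}}_{\cH}=\ip{a_{i_0}\xi_{i_0}}{\eta_{i_0}}_{\cH_{i_0}}$. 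Hence the preimage under $\id$ of that sub-basic product-open set is exactly a sub-basic $\tau$-open set, so this direction is immediate.

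\textbf{Step 2: $\id$ is continuous from the product topology to $\tau$.} Here the work is in approximating a general rank-one WOT-functional on $\cH$ by finitely many coordinates. Fix $\xi=(\xi_i)_i,\eta=(\eta_i)_i\in\cH$ and $\varepsilon>0$; I must find a product-open neighbourhood of a given $a^{(0)}\in\ball(\cM)$ contained in $\{a:\,|\ip{(a-a^{(0)})\xi}{\eta}_\cH|<\varepsilon\}$. Since $\sum_i\norm{\xi_i}^2<\infty$ and $\sum_i\norm{\eta_i}^2<\infty$, choose a finite set $F\subset\Ind$ with $\big(\sum_{i\notin F}\norm{\xi_i}^2\big)^{1/2}$ and $\big(\sum_{i\notin F}\norm{\eta_i}^2\big)^{1/2}$ small; because every $a_i$ has norm $\le 1$, the Cauchy--Schwarz estimate $|\sum_{i\notin F}\ip{a_i\xi_i}{\eta_i}|\le\big(\sum_{i\notin F}\norm{\xi_i}^2\big)^{1/2}\big(\sum_{i\notin F}\norm{\eta_i}^2\big)^{1/2}$ bounds the tail contribution by $\varepsilon/3$ uniformly over $\ball(\cM)$. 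For the finitely many $i\in F$, shrinking $a_i$ near $a^{(0)}_i$ in $\tau_i$ makes $|\ip{(a_i-a^{(0)}_i)\xi_i}{\eta_i}|<\varepsilon/(3\abs{F})$; intersecting these finitely many coordinate conditions gives a product-open neighbourhood of $a^{(0)}$ on which the total deviation is $<\varepsilon$.

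\textbf{Main obstacle.} There is no serious obstacle — the only thing requiring care is Step 2, namely that one must use the uniform norm bound on the entries (the fact that we are inside $\ball(\cM)$, not all of $\cM$) to control the infinitely many omitted coordinates; without that bound the product topology would be strictly coarser. Combining Steps 1 and 2 gives that $\id$ and its inverse are both continuous, so $\id$ is a homeomorphism, completing the proof.
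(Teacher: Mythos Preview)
Your proof is correct, but it proceeds differently from the paper's. The paper establishes Step~1 just as you do, and then \emph{does} take the shortcut you said was unavailable: since $\ball(\Bdd(\cH))$ is WOT-compact and $\ball(\cM)$ is WOT-closed inside it, $(\ball(\cM),\tau)$ is compact Hausdorff, so the continuous bijection $\id$ is automatically a homeomorphism. In other words, the compactness of $\ball(\cM)$ is not a hypothesis of the lemma but a standard consequence of the setup, and the paper exploits it to avoid your Step~2 entirely. Your direct $\varepsilon/3$ argument is a genuine alternative: it is more hands-on and self-contained (no appeal to Banach--Alaoglu or to $\cM$ being WOT-closed), and it makes explicit exactly where the uniform norm bound $\norm{a_i}\le 1$ is needed --- a point the compactness argument hides. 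The trade-off is that the paper's proof is shorter, while yours would still work in settings where compactness is unavailable. One minor remark: in your tail estimate you should account for the factor of $2$ coming from subtracting $a^{(0)}$, but this is only a matter of adjusting constants.
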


\begin{proof}
We start by showing that $\id$ is continuous. By the universal property of product topologies, it suffices to show that for each $k\in\Ind$ the co-ordinate projection
$P_k: \ball(\cM) \to  \ball(\Bdd(\cH_k))$
is continuous for the respective weak operator topologies. But this follows immediately from the identity $P_k(T)\xi_k = T \imath_k(\xi_k)$, where $\imath_k:\cH_k\hookrightarrow\cH$ is the natural embedding of Hilbert spaces.

Note that
$\ball(\Bdd(\cH))$ is compact and Hausdorff in the relative WOT; and since $\cM$ is WOT -closed in $\Bdd(\cH)$, it is easily checked that $\ball(\cM)$ is WOT-closed in $\ball(\Bdd(\cH))$. Hence $(\ball(\cM),\tau)$ is compact and Hausdorff, so that $\id$, being a continuous surjection from a compact space to a Hausdorff space, is an open mapping.
\end{proof}

Now let $G$ be a topological group and for each $i\in\Ind$, let $\sigma_i:G_i\to\cU(\cH_i)$ be a WOT-continuous unitary representation.
Let $\sigma =\prod_i \sigma_i$ be the direct product representation $G \to \cU(\cH)$.
By the universal property of product topologies,
 $\sigma:G\to \prod_i (\ball(\Bdd(\cH_i)),\tau_i)$ is continuous. Applying Lemma~\ref{l:product of relative WOT}, we conclude that $\sigma$ is WOT-continuous as required.
\end{section}

\subsection*{Acknowledgements}
The roots of this paper lie in discussions with E.~Samei. I would like to thank him and N.~Spronk for useful conversations and for their interest.
My thanks also go to the referee for useful feedback.

\bibliographystyle{siam}
\bibliography{cbhelson_bib2}

\end{document}